\title[Value Distribution of Two Dirichlet $L$-functions]{On the Value Distribution of Two Dirichlet $L$-functions}
\author{Niko Laaksonen}
\address{Department of Mathematics, University College London, Gower Street, London WC1E 6BT, United Kingdom}
\email{n.laaksonen@ucl.ac.uk}
\author{Yiannis N.~Petridis}
\address{Department of Mathematics, University College London, Gower Street, London WC1E 6BT, United Kingdom}
\email{i.petridis@ucl.ac.uk}
\thanks{The second author would like to thank ESI for support during the programme on Arithmetic Geometry and Automorphic Representations.}
\date{\today}
\subjclass[2010]{11M06, 11M26}
\keywords{Dirichlet $L$-function; value-distribution}
\renewcommand*{\intitlepunct}{\space}
\numberwithin{equation}{section}
\newcommand{\primesP}{\mathscr{P}}
\newcommand{\naturals}{\mathbb{N}}
\newcommand{\reals}{\mathbb{R}}
\newcommand{\complexes}{\mathbb{C}}
\newcommand{\rationals}{\mathbb{Q}}
\DeclareMathOperator{\arccot}{arccot}
\newcommand{\tendsto}{\longrightarrow}
\newcommand{\bigo}[2][]{O_{#1}(#2)}
\newcommand{\lilo}[2][]{o_{#1}(#2)}
\newcommand{\bigos}[2][]{O_{#1}\left(#2\right)}
\newcommand{\ccontour}{\mathfrak{C}}
\newcommand{\inti}{\mathcal{I}}
\newcommand{\intf}{\mathcal{F}}
\newcommand{\inte}{\mathcal{E}}
\DeclareMathOperator*{\Res}{Res}
\DeclareMathOperator{\re}{Re}
\DeclareMathOperator{\im}{Im}
\DeclareMathOperator{\Arg}{Arg}
\DeclareDocumentCommand{\widesum}{m O{}}{\;\smashoperator{\sum_{\cramped{\substack{#1}}}^{#2}}\;}
\DeclareDocumentCommand{\rwidesum}{m O{}}{\smashoperator[r]{\sum_{\cramped{\substack{#1}}}^{#2}}\;} 
\DeclareDocumentCommand{\lwidesum}{m O{}}{\smashoperator[l]{\sum_{\cramped{\substack{#1}}}^{#2}}} 
\renewcommand{\d}[1]{\,d#1}
\let\abs\undefined
\DeclarePairedDelimiter{\abs}{\lvert}{\rvert}
\DeclarePairedDelimiter{\pdist}{\langle}{\rangle}
\providecommand\given{}
\newcommand{\SetSymbol}[1][]{\nonscript\:#1\vert\nonscript\:\mathopen{}\allowbreak}
\DeclarePairedDelimiterX\Set[1]\{\}{%
    \renewcommand\given{\SetSymbol[\delimsize]}
    #1
}
\theoremstyle{plain}
\newtheorem{thm}{Theorem}[section]
\newtheorem*{thm*}{Theorem}
\newtheorem{prop}{Proposition}[section]
\newtheorem{lemma}{Lemma}[section]
\theoremstyle{definition}
\theoremstyle{remark}
\newtheorem{remark}{Remark}
\newcommand{\qlt}{\sqrt{\frac{q\ell t}{2\pi}}}
\newcommand{\qtoverl}{\sqrt{\frac{qt}{2\pi \ell}}}
\newcommand{\ltoverq}{\sqrt{\frac{\ell t}{2\pi q}}}
\newcommand{\qlg}{\sqrt{\frac{q\ell\gamma}{2\pi}}}
\newcommand{\qgoverl}{\sqrt{\frac{q\gamma}{2\pi \ell}}}
\newcommand{\lgoverq}{\sqrt{\frac{\ell\gamma}{2\pi q}}}
\newcommand{\qlT}{\sqrt{\frac{q\ell T}{2\pi}}}
\newcommand{\qlTs}{q\ell T/2\pi}
\newcommand{\bige}{\varepsilon}
\newcommand{\af}{\mathfrak{a}}
\begin{document}

	\maketitle

	\begin{abstract}
        \noindent We look at the values of two Dirichlet $L$-functions at the Riemann zeros (or a horizontal shift of them). Off the
		critical line we show that for a positive proportion of these points the pairs of values of the two $L$-functions are linearly
		independent over $\reals$, which, in particular, means that their arguments are different. On the critical line we show that,
        up to height $T$, the values are different for $cT$ of the Riemann zeros for some positive $c$.
	\end{abstract}
    \vspace{\baselineskip}

\section{Introduction}					%

    The value distribution of $\zeta(s)$ and $L(s,\chi)$ is a classical problem that has recently attracted attention
    in for example~\cite{garunkstis2010},~\cite{gonek1984},~\cite{steuding2007}. A typical focus for investigation for
    these functions is the distribution of their zeros.
	In 1976 Fujii~\cite{fujii1976} showed that a positive proportion of zeros of $L(s,\psi)L(s,\chi)$ are distinct, where
    the characters are primitive and not necessarily of distinct moduli. A zero of the product is said to be distinct
    if it is a zero of only one of the two, or if it is a zero of both then it occurs with different multiplicities for
    each function.
    It is, in fact, believed that all zeros of Dirichlet $L$-functions to primitive characters are simple, and that
    two $L$-functions with distinct primitive characters do not share any non-trivial zeros at all. This comes from
    the Grand Simplicity Hypothesis (GSH), see~\cite{rubinstein1994}. The hypothesis is that the set
    \[\Set{\gamma\given L(\tfrac{1}{2}+i\gamma,\chi)=0\text{ and $\chi$ is primitive}}\]
	is linearly independent over $\rationals$. Since we are counting with multiplicities, it is implicit in the
    statement of the GSH that all zeros of Dirichlet $L$-functions are simple, and that $\gamma\neq 0$,
    i.e.\ $L(\tfrac{1}{2},\chi)\neq 0$. A similar result is expected for an even bigger class of functions.
    R.~Murty and K.~Murty~\cite{murty1994} proved that two functions of the Selberg class $\mathcal{S}$ cannot share too many zeros
    (counted with multiplicity). They show that if $F$, $G\in\mathcal{S}$ then $F=G$ provided that
    \[\abs{Z_{F}(T)\Delta Z_{G}(T)}=o(T),\]
    where $Z_{F}(T)$ denotes the set of zeros of $F(s)$ in the region $\re s\geq 1/2$ and $\abs{\im s}\leq T$,
    and $\Delta$ is the symmetric difference. In 1986 Conrey et al.~\cite{conrey1986} proved that the Dedekind zeta function
    of a quadratic number field has infinitely many simple zeros. They were, however, unable to obtain
    the result for a positive proportion of the zeros.

    Apart from looking at the zeros, there has also been investigation into the $a$-values of $\zeta$ and $L(s,\chi)$, that is,
    the distribution of $s$ such that $\zeta(s)=a$ (or $L(s,\chi)=a$) for some fixed $a\in\complexes$. In~\cite{garunkstis2014} Garunkštis and Steuding prove
    a discrete average for $\zeta'$ over the $a$-values of $\zeta$, which implies that there are infinitely many simple $a$-points in the critical
    strip. On the critical line, however, we do not even know whether there are infinitely many $a$-points. For further results on the distribution
    of simple $a$-points see~\cite{gonek2012}.
    On the other hand, we can also look at points where $\zeta(s)$ (or $L(s,\chi)$) has a specific fixed argument $\varphi\in(-\pi,\pi]$. In~\cite{kalpokas2013}
    the authors prove that $\zeta$ takes arbitrarily large values with argument $\varphi$, i.e.
    \[\smashoperator{\max_{\substack{0<t\leq T\\ \Arg(\zeta(1/2+it))=\varphi}}}\;\abs{\zeta(\tfrac{1}{2}+it)}\gg(\log T)^{5/4}.\]

	In this work we compare the values or the arguments of
    two Dirichlet $L$-functions at a specific set of sample points. We choose these points to be either the Riemann zeros, $\beta+i\gamma$,
    or a horizontal shift of them.
    We will prove two results in this direction depending on whether we are on the critical line or not.
    \begin{thm}\label{thmone}
        Assume the Riemann Hypothesis, i.e.~$\beta=\tfrac{1}{2}$. Let $\chi_1$, $\chi_2$ be two primitive Dirichlet characters modulo distinct primes $q$ and $\ell$, respectively.
        Let $\sigma\in(\frac{1}{2},1)$, then, for a positive proportion of the non-trivial zeros of $\zeta(s)$ with $\gamma>0$, the values of the Dirichlet $L$-functions $L(\sigma+i\gamma,\chi_{1})$ and $L(\sigma+i\gamma,\chi_{2})$ are linearly independent over $\reals$.
    \end{thm}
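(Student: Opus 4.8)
The plan is to exploit the fact that off the critical line the values $L(\sigma+i\gamma,\chi_1)$ and $L(\sigma+i\gamma,\chi_2)$ are, on average over the Riemann zeros $\gamma$, close to $1$ (because the Dirichlet series converges and the dominant term is $1$), while the pair $(1,1)$ is \emph{not} $\reals$-linearly dependent in the sense that two complex numbers $z_1,z_2$ fail to be $\reals$-linearly independent iff $\im(z_1\overline{z_2})=0$; and $\im(1\cdot\overline{1})=0$, so $(1,1)$ \emph{is} dependent. So I must instead detect the imaginary part of the error terms. Write $L(\sigma+i\gamma,\chi_j)=1+E_j(\gamma)$ where $E_j(\gamma)=\sum_{n\ge 2}\chi_j(n)n^{-\sigma-i\gamma}$. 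Linear dependence of the pair over $\reals$ is equivalent to $\im\big(L(\sigma+i\gamma,\chi_1)\overline{L(\sigma+i\gamma,\chi_2)}\big)=0$, i.e. $\im\big(E_1(\gamma)-\overline{E_2(\gamma)}+E_1(\gamma)\overline{E_2(\gamma)}\big)=0$. The strategy is a second-moment/first-moment argument: show that the relevant real-valued quantity whose vanishing would force dependence is, on average, bounded away from $0$, or more precisely that its square has positive average while its mean is controlled, so that it is nonzero for a positive proportion of $\gamma$.

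Concretely, first I would set $D(\gamma)=\im\big(L(\sigma+i\gamma,\chi_1)\overline{L(\sigma+i\gamma,\chi_2)}\big)$ and aim to prove, for $T\to\infty$,
\[
\frac{1}{N(T)}\sum_{0<\gamma\le T} D(\gamma)^2 \;\geq\; c_1 > 0,
\qquad
\frac{1}{N(T)}\sum_{0<\gamma\le T} \abs{D(\gamma)} \;\leq\; c_2,
\]
where $N(T)$ is the number of zeros up to height $T$. The second bound is routine from the rapid convergence of the Dirichlet series for $\sigma>1/2$ together with a standard explicit-formula / Landau-type estimate for sums $\sum_{0<\gamma\le T} n^{-i\gamma}$. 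For the first bound, expand $D(\gamma)^2$ into a double Dirichlet series in $n^{-i\gamma}m^{i\gamma}$ over $n,m\ge 1$ with coefficients built from $\chi_1,\chi_2$; the diagonal terms $n=m$ contribute a main term of the shape $\sum_{n,m}(\dots)(nm)^{-\sigma}$ times $N(T)$, and I would need this diagonal contribution to be strictly positive. The off-diagonal terms are handled by a discrete mean value theorem over Riemann zeros (of Gonek–Landau type: $\sum_{0<\gamma\le T}(n/m)^{i\gamma}$ has a secondary main term of size $\frac{T}{2\pi}\Lambda(n/m)/\sqrt{n/m}$ when $n/m$ is a prime power and is otherwise small), which under RH can be shown to be lower order. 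Then the Paley–Zygmund / Cauchy–Schwarz inequality,
\[
\#\Set{0<\gamma\le T \given \abs{D(\gamma)} \ge \tfrac{1}{2}\lambda} \;\gg\; N(T),
\]
for a suitable $\lambda>0$, gives a positive proportion of $\gamma$ with $D(\gamma)\ne 0$, hence $\reals$-linear independence, and in particular distinct arguments.

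The main obstacle I anticipate is twofold. First, verifying positivity of the diagonal main term: after expanding $D(\gamma)^2$ the diagonal is a sum like $\sum_{n\ge1}(\dots)$ that must be shown not to vanish; because $\chi_1,\chi_2$ have distinct prime moduli $q\ne\ell$ the characters are ``independent enough'' multiplicatively, and one expects the Euler product over primes $p\nmid q\ell$ to give a clean positive factor, but pinning down the precise constant (and ruling out accidental cancellation at $p=q$ or $p=\ell$) requires care. Second, the off-diagonal estimate relies on the discrete mean value theorem for $\sum_{0<\gamma\le T}x^{i\gamma}$; the error terms there grow with $x$, so one must truncate the Dirichlet series for $L(\sigma+i\gamma,\chi_j)$ at some length $X=X(T)$ (a polynomial in $T$ suffices since $\sigma>1/2$ makes the tail negligible) and then ensure the mean-value error, summed over $n,m\le X$, stays below the diagonal main term — this is where RH is used to get a strong enough error term in the explicit formula. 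Balancing the truncation length against the allowable error is the technical heart of the argument.
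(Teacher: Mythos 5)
Your reduction of linear independence to $\im\bigl(L(\sigma+i\gamma,\chi_1)\overline{L(\sigma+i\gamma,\chi_2)}\bigr)\neq 0$ is correct, but the moment scheme you build on it does not deliver the conclusion. From a lower bound $\sum_{0<\gamma\le T}D(\gamma)^2\ge c_1N(T)$ together with an upper bound $\sum_{0<\gamma\le T}\abs{D(\gamma)}\le c_2N(T)$ one cannot conclude a positive proportion of nonzero $D(\gamma)$: a quantity equal to $\sqrt{N(T)}$ at a single zero and $0$ at all the others satisfies both inequalities. Chebyshev/Paley--Zygmund (equivalently Cauchy--Schwarz in the form $(\sum\abs{D})^2\le\#\{\gamma:D(\gamma)\neq0\}\cdot\sum D^2$) needs a \emph{lower} bound on the first moment and an \emph{upper} bound on the second --- the reverse of what you propose; the alternative pairing of your second-moment lower bound with an upper bound on $\sum D(\gamma)^4$ is an eighth-moment-type object in the $L$-functions, beyond what the Gonek--Landau machinery can reach (off-diagonal products far exceed length $T$) and not salvageable by pointwise bounds, since only RH for $\zeta$, not GRH, is assumed. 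If you repair the scheme in the natural way (first moment below, second moment above --- this is what the paper does), you hit precisely the obstruction the paper is organised around: the diagonal terms give $\sum_{0<\gamma\le T}D(\gamma)\sim N(T)\,\im L(2\sigma,\chi_1\overline{\chi}_2)$, and this imaginary part can vanish, e.g.\ when both characters are quadratic. The paper's essential idea, missing from your proposal, is to weight by the finite Euler product $B(s,P)=\prod_{p\le P}(1-\chi_1(p)p^{-s})(1-\chi_2(p)p^{-s})$ and to prove (Proposition~\ref{prop3}) by comparing Euler products that for a suitable prime $P$ the weighted average $\sum_{0<\gamma\le T}A(\gamma)\sim CN(T)$ with $C\neq0$; combined with $\sum\abs{A(\gamma)}^2\ll N(T)$ (Proposition~\ref{prop2}) and Cauchy--Schwarz this yields the theorem.

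A secondary but real problem is your handling of the Dirichlet series. For $\tfrac12<\sigma<1$ the series converges only conditionally, and a bare truncation ``at a polynomial length'' with the tail declared negligible is not available: partial summation forces a length of order $T^{1/\sigma}$ for the tail to be small pointwise at height $T$, and with sums that long the off-diagonal Gonek--Landau error terms (the analogues of $Z_{23}$, $Z_{24}$) exceed $N(T)$ for $\sigma$ in part of the range. The paper avoids this by using Lavrik's approximate functional equation with unbalanced lengths, producing a principal sum of length about $\sqrt{\gamma}$ plus a second sum damped by $\abs{X(s,\chi)}\asymp\gamma^{1/2-\sigma}$, which is what keeps every off-diagonal contribution of admissible size. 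So both the combinatorial heart (the weight $B(s,P)$ guaranteeing a nonvanishing main term) and the analytic heart (the correct truncation via the approximate functional equation, and the correct pairing of moments) are missing or misplaced in the proposal.
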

	\begin{remark}
		If the values $L(\sigma+i\gamma,\chi_{1})$ and $L(\sigma+i\gamma,\chi_{2})$ are linearly independent over $\reals$, then in particular their arguments are different.
	\end{remark}
    \begin{thm}\label{thmtwo}
        Two Dirichlet $L$-functions with primitive characters modulo distinct primes, attain different values at $cT$ non-trivial zeros of $\zeta(s)$ up to height $T$, for some positive constant $c$.
    \end{thm}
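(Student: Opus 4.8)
My plan is a moment argument. Set $D(s)=L(s,\chi_1)-L(s,\chi_2)$; since $\chi_1,\chi_2$ are non-principal, $D$ is entire, and $D(\rho)\neq 0$ precisely when $L(\rho,\chi_1)\neq L(\rho,\chi_2)$. So it suffices to show $\#\{0<\gamma\le T:D(\rho)\neq 0\}\gg T$, where $\rho=\beta+i\gamma$ runs over the non-trivial zeros of $\zeta$. I would obtain this by computing the first moment of $D$ over the zeros, bounding a higher moment, and playing the two off against each other by Cauchy--Schwarz.

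First I would evaluate $S_1(T)=\sum_{0<\gamma\le T}D(\rho)$. By the argument principle this is $\frac1{2\pi i}\oint_{\partial R}\frac{\zeta'}{\zeta}(s)D(s)\,ds$ over a rectangle $R$ containing the critical strip up to height $T$, plus the residue $D(1)$ at the pole of $\zeta$. Inserting the approximate functional equation for each $L(\cdot,\chi_j)$ at $\rho$, the diagonal term $n=1$ contributes $N(T)\sim\frac{T}{2\pi}\log\frac{T}{2\pi}$ to each of $\sum L(\rho,\chi_1)$ and $\sum L(\rho,\chi_2)$ — and these cancel in $D$. The remaining Dirichlet-polynomial terms are handled by Landau's formula $\sum_{0<\gamma\le T}n^{-\rho}=-\frac{T}{2\pi}\frac{\Lambda(n)}{n}+\bigo{n\log(2nT)}$, which gives $\frac{T}{2\pi}\bigl(\frac{L'}{L}(1,\chi_1)-\frac{L'}{L}(1,\chi_2)\bigr)$, while the ``dual'' sums from the two functional equations contribute a further term of order $T$ whose coefficient depends on the conductors $q,\ell$ and the root numbers. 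Altogether $S_1(T)=c(\chi_1,\chi_2)\,T+\bigo{T^{1-\delta}}$ for an explicit constant $c(\chi_1,\chi_2)$.

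The step I expect to be the main obstacle is showing $c(\chi_1,\chi_2)\neq 0$, and this is exactly where $q\neq\ell$ must enter: the conductors appear in the $\Gamma$-factors of the two $L$-functions — hence in $\frac{L'}{L}(1,\chi_j)$, which differ by $-\tfrac12\log(q/\ell)$ up to terms depending on the zeros of $L(\cdot,\chi_j)$ via the Hadamard factorisation — and in the cutoffs of the approximate functional equations, so the two contributions cannot cancel identically. If isolating the sign of $c$ directly proves awkward, a more robust route is to run the same argument with $|L(\rho,\chi_1)|^2-|L(\rho,\chi_2)|^2$ in place of $D(\rho)$: the analogous moment computation gives mean $\bigl(\tfrac{\phi(q)}{q}-\tfrac{\phi(\ell)}{\ell}\bigr)\tfrac{T}{4\pi}\log^2 T$, which is nonzero simply because $q\neq\ell$ are primes, and since $L(\rho,\chi_1)=L(\rho,\chi_2)$ forces $|L(\rho,\chi_1)|=|L(\rho,\chi_2)|$, only zeros with $D(\rho)\neq 0$ contribute.

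Finally, the mean-square of Dirichlet $L$-functions on the critical line, weighted by the density of zeros (with the few zeros having $\beta\neq\tfrac12$ controlled by a zero-density estimate and the convexity bound), gives $\sum_{0<\gamma\le T}|D(\rho)|^2\ll T(\log T)^2$, and likewise $\sum_{0<\gamma\le T}|D(\rho)|^4\ll T(\log T)^5$. Cauchy--Schwarz applied to $S_1(T)=\sum_{D(\rho)\neq 0}D(\rho)$, or to the $|L|^2$-difference against the fourth-moment bound, then yields the lower bound $\#\{0<\gamma\le T:L(\rho,\chi_1)\neq L(\rho,\chi_2)\}\gg T$ of Theorem~\ref{thmtwo}; squeezing out the last logarithmic factor to reach a genuinely linear count (rather than $T$ divided by a power of $\log T$) is the one place where a little extra care — e.g. a mollified second or fourth moment — is needed, but everything apart from the non-vanishing of the main term is a long yet routine contour-integral and moment computation.
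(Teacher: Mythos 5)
Your overall framework (a contour integral of $\tfrac{\zeta'}{\zeta}(s)$ against the difference of the two $L$-functions, Landau/Gonek-type formulas, then Cauchy--Schwarz) is the same in spirit as the paper's, but there is a genuine gap at exactly the point you flag as ``the main obstacle'', and the paper's proof exists precisely to remove it. Once the $n=1$ terms cancel, your first moment $S_1(T)=\sum_{0<\gamma\le T}\bigl(L(\rho,\chi_1)-L(\rho,\chi_2)\bigr)$ is only of size $T$, with a constant $c(\chi_1,\chi_2)$ built out of $\tfrac{L'}{L}(1,\chi_j)$, root numbers and Gauss sums, and there is no known way to prove such a constant is nonzero for an arbitrary pair of primitive characters: the Hadamard-factorisation heuristic does not help, since the ``terms depending on the zeros of $L(\cdot,\chi_j)$'' are of the same size as the $-\tfrac12\log(q/\ell)$ you want to isolate and are completely uncontrolled. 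The paper circumvents this by twisting with $B(s,p)=p^{s}$: the first moment of $p^{\rho}\bigl(L(\rho,\chi_1)-L(\rho,\chi_2)\bigr)$ is then $\sim(\overline{C}_{\chi_1}-\overline{C}_{\chi_2})\tfrac{T}{2\pi}\log\tfrac{T}{2\pi}$ with $C_{\chi_1}=G(1,\overline{\chi}_1)G(-p,\chi_1)/q$ (and similarly with $\ell$ for $\chi_2$), and the Gauss-sum identities show $C_{\chi_1}=C_{\chi_2}$ if and only if $\chi_1(p)=\chi_2(p)$; non-vanishing is then forced by choosing, via the Chinese Remainder Theorem and Dirichlet's theorem, a prime $p\equiv1\pmod q$, $p\equiv a\pmod\ell$ with $\chi_2(a)\neq1$. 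This freedom to choose the twist is the missing idea in your proposal.

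Even granting $c(\chi_1,\chi_2)\neq0$, your bookkeeping does not deliver $cT$. The second moment $\sum_{0<\gamma\le T}\lvert D(\rho)\rvert^{2}$ really is of size $T\log^{2}T$ (the diagonal alone contributes $\asymp N(T)\log T$), so Cauchy--Schwarz with a first moment of size $T$ gives only $\gg T/\log^{2}T$ zeros; your fallback comparing $\lvert L(\rho,\chi_1)\rvert^{2}-\lvert L(\rho,\chi_2)\rvert^{2}$ has first moment $\asymp T\log^{2}T$ but must be played against a fourth moment of size $T\log^{5}T$ (a bound which, at zeros possibly off the critical line and without RH, is itself far from routine), yielding $\gg T/\log T$. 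Neither reaches $cT$, and the missing logarithms are not ``a little extra care'': as the paper's own remark explains, mollification along the zeta zeros is not known to work here, since the positive-proportion method of Li and Radziwi\l\l{} relies on the rigidity of an arithmetic progression. The twist $p^{s}$ repairs this as well: it restores the first moment to full size $T\log T\asymp N(T)$, while the second moment is bounded by $T\log^{2}T$ through contour integrals of $\tfrac{\zeta'}{\zeta}(s)L(s,\chi_j)L(1-s,\overline{\chi}_{j'})$ (which also deals with off-line zeros without any zero-density input), so that the Cauchy--Schwarz quotient is exactly of order $T$.
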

	\begin{remark}
        In Theorem~\ref{thmtwo} we fail to obtain positive proportion and we expect this to be a limitation of the method used. In~\cite{garunkstis2013} the authors
        look at the mean square of a single Dirichlet $L$-function at the zeros of another, and show that it is non-zero for at least $cT$ of the zeros for some explicit $c>0$.
        On the other hand, attempting to introduce a mollifier to overcome this limitation does not seem hopefuly either. Martin and Ng~\cite{martin2013} evaluate the
        mollified first and second moments of $L(s,\chi)$ in arithmetic progressions on the critical line and prove that at least $T(\log T)^{-1}$ of the values are nonzero,
        which misses the positive proportion by a logarithm. This was extended to positive proportion by Li and Radziwiłł~\cite{li2015}. However, their method relies
        on the strong rigidity of the arithmetic progression and fails when the sequence is slightly perturbed.
	\end{remark}
	\begin{remark}
		We assume that the conductors of $\chi_{1}$ and $\chi_{2}$ are primes in order to make the notation simpler. It should be possible to generalise our results to
        the case when the conductors are coprime or have distinct prime factors.
	\end{remark}
    A main ingredient in the proofs is the Gonek--Landau formula, and results derived from it. In 1911 Landau~\cite{landau1911}
    proved that
    \[ \widesum{0<\gamma\leq T}x^{\rho}=-\frac{T}{2\pi}\Lambda(x)+\bigo{\log T},\]
    as $T\tendsto\infty$, where $\Lambda(x)$ is the von Mangoldt function extended to $\reals$ by letting $\Lambda(x)=0$
    $\forall x\in\reals\setminus\naturals$. Here the sum runs over the positive imaginary parts of the Riemann zeros.
    What is striking in this formula is that the right-hand side grows by a factor of $T$ only if $x$ is a prime power.
    This version of Landau's formula is of limited practical use since the estimate is not uniform in $x$.
    Gonek~\cite{gonek1993} proved a version of Landau's formula which is uniform in both $x$ and $T$ with only small
    sacrifices to the error term:
    \begin{lemma}[Gonek--Landau Formula]\label{thm:gonek}
        Let $x$, $T>1$. Then
        \begin{equation}\label{eq:landau}
            \begin{multlined}
            \widesum{0<\gamma\leq T}x^{\rho} =-\frac{T}{2\pi}\Lambda(x)+\bigos{x\log 2xT \,\log\log 3x}\\
            +\bigos{\log x\,\min\left(T,\frac{x}{\pdist{x}}\right)}+\bigos{\log 2T\,\min\left(T,\frac{1}{\log x}\right)},
            \end{multlined}
        \end{equation}
        where $\pdist{x}$ denotes the distance from $x$ to the nearest prime power other than $x$ itself.
    \end{lemma}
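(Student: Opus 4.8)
\emph{Sketch of proof.} The plan is to evaluate $\sum_{0<\gamma\leq T}x^{\rho}$ by contour integration of $x^{s}\,\zeta'/\zeta(s)$. We may assume $T$ is large; for small $T$ the sum is empty and the claim reduces to $\frac{T}{2\pi}\Lambda(x)=O(\cdots)$, which is immediate from the error terms. First I would pass to a convenient height. Since $\int_{T}^{T+1}\sum_{\gamma}\log^{+}\frac{1}{\lvert t-\gamma\rvert}\,dt\ll\log T$ (there being $\ll\log T$ ordinates in $[T-1,T+2]$ and $\int_{-1}^{1}\log\frac{1}{\lvert u\rvert}\,du<\infty$), there is $T_{1}\in[T,T+1]$ with $\sum_{\lvert\gamma-T_{1}\rvert\leq 1}\log^{+}\frac{1}{\lvert T_{1}-\gamma\rvert}\ll\log T$; replacing $T$ by $T_{1}$ alters the left side by $\sum_{T<\gamma\leq T_{1}}x^{\beta}\ll x\log T$, which is absorbed into the error, so I may assume $T$ itself has this property and in particular is not an ordinate of a zero. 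Put $a:=1+1/\log x$ (perturbed slightly if necessary so that the line $\re s=1-a$ meets no pole of $\zeta'/\zeta$ or of $\chi'/\chi$). Applying the residue theorem to the positively oriented rectangle with vertices $a+i\eta$, $a+iT$, $1-a+iT$, $1-a+i\eta$, for a fixed $\eta\in(1,14)$, gives
\[
\widesum{0<\gamma\leq T}x^{\rho}=\frac{1}{2\pi i}\Bigl(\int_{\mathrm{right}}+\int_{\mathrm{top}}+\int_{\mathrm{left}}+\int_{\mathrm{bot}}\Bigr)x^{s}\frac{\zeta'}{\zeta}(s)\,ds,
\]
since the only poles of the integrand inside are exactly the nontrivial zeros with $0<\gamma<T$ (i.e.\ the ones being counted), while $s=1$ and the trivial zeros, having imaginary part $0<\eta$, lie outside.

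On the right edge $\re s=a$ I would insert $\zeta'/\zeta(s)=-\sum_{n}\Lambda(n)n^{-s}$ and integrate term by term. The diagonal $n=x$ (present only when $x\in\naturals$) contributes $-\frac{T}{2\pi}\Lambda(x)+O(\log x)$, the main term. For $n\neq x$ one has $\bigl\lvert\int_{a+i\eta}^{a+iT}(x/n)^{s}\,ds\bigr\rvert\ll(x/n)^{a}\min\bigl(T,\lvert\log(x/n)\rvert^{-1}\bigr)$; splitting the sum at $n<x/2$, $x/2\leq n\leq 2x$ and $n>2x$, the two outer ranges contribute $\ll x^{a}\sum_{n}\Lambda(n)n^{-a}=x^{a}\bigl(-\zeta'/\zeta(a)\bigr)\ll x\log x$. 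In the middle range $\lvert\log(x/n)\rvert\asymp\lvert x-n\rvert/x$: the prime power nearest $x$ gives $\ll\log x\,\min(T,x/\pdist{x})$, and summing the remaining prime powers $n$ with $1\leq\lvert x-n\rvert\leq x/2$ dyadically, using merely the trivial bound $\leq 2^{j}$ on the number of integers in a dyadic block of length $2^{j}$ (this is precisely what forces the $\log\log$), yields $\ll x\log x\,\log\log 3x$. So the right edge produces the main term, the second error term, and the $x$-part of the first.

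For the left edge $\re s=1-a$ I would use the functional equation in the form $\zeta'/\zeta(s)=\chi'/\chi(s)-\zeta'/\zeta(1-s)$, where $\chi(s)=2^{s}\pi^{s-1}\sin(\pi s/2)\Gamma(1-s)$. Since $\re(1-s)=a>1$, the term $-\zeta'/\zeta(1-s)=\sum_{n}\Lambda(n)n^{s-1}$ contributes $\ll x^{1-a}(\log x)^{-1}\sum_{n}\Lambda(n)n^{-a}\ll 1$. For the remaining part, Stirling gives $\chi'/\chi(\sigma+it)=-\log(t/2\pi)+O(1/t)$ for $t\geq 1$; since $\lvert x^{1-a}\rvert\asymp 1$, integrating against $x^{it}$ and integrating by parts in $t$ to exploit the oscillation converts $\int_{\eta}^{T}x^{it}\log(t/2\pi)\,dt$ into $O(\log 2T/\log x)$, which accounts for the third error term $\log 2T\,\min(T,1/\log x)$. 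The bottom edge at height $\eta$ is $O(x)$, since $\zeta'/\zeta$ is bounded on that fixed horizontal segment.

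The top edge, $\re s\in[1-a,a]$ at height $T$, is the crux. Here I would write $\zeta'/\zeta(s)=\sum_{\lvert\gamma-T\rvert\leq 1}(s-\rho)^{-1}+O(\log T)$. The $O(\log T)$ part contributes $\ll\log T\int_{1-a}^{a}x^{\sigma}\,d\sigma\ll x^{a}\log T/\log x\ll x\log T$. For the pole terms, carrying out the $\sigma$-integration first turns $\int_{1-a}^{a}x^{\sigma+iT}(\sigma+iT-\rho)^{-1}\,d\sigma$ into a primitive of size $\ll x^{a}\bigl(1+\log^{+}\tfrac{1}{\lvert T-\gamma\rvert}\bigr)$ — the key point being that $\int(s-\rho)^{-1}\,ds$ is a logarithm, hence stays bounded even when $T-\gamma$ is small — and summing over the $\ll\log T$ zeros with $\lvert\gamma-T\rvert\leq 1$, the choice of height gives $\ll x^{a}\bigl(\log T+\sum_{\lvert\gamma-T\rvert\leq 1}\log^{+}\tfrac{1}{\lvert T-\gamma\rvert}\bigr)\ll x\log T$, the $T$-part of the first error term. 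I expect this step — estimating the horizontal integral without losing a second factor of $\log T$, which is exactly what the preliminary adjustment of $T$ is designed to prevent — to be the main obstacle; collecting the four contributions then finishes the proof.
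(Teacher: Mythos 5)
The paper does not actually prove this lemma: it is quoted from Gonek~\cite{gonek1993} (uniform version of Landau's formula), so there is no internal proof to compare against. Your sketch follows the classical Landau--Gonek route --- the residue theorem applied to $x^{s}\zeta'(s)/\zeta(s)$ on the rectangle with abscissae $1-a$ and $a=1+1/\log x$, the Dirichlet series on the right edge giving $-\frac{T}{2\pi}\Lambda(x)$ and the term $\log x\,\min(T,x/\langle x\rangle)$ from the prime power nearest $x$, the functional equation plus integration by parts on the left edge giving $\log 2T\,\min(T,1/\log x)$, and, for the top edge, a preliminary averaging choice of a good height together with the local expansion $\zeta'/\zeta(s)=\sum_{|\gamma-T|\le 1}(s-\rho)^{-1}+O(\log T)$ --- and those parts, including the one you correctly single out as delicate, are in order.

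There is, however, one concrete error: the first error term is not obtained the way you claim. You assert that the ``trivial bound $\le 2^{j}$ on the number of integers in a dyadic block'' is what produces the $\log\log 3x$. It is not. With the trivial count, a block $2^{j}\le|x-n|<2^{j+1}$ of the middle range contributes $\ll \log 2x\cdot 2^{j}\cdot\min(T,x/2^{j})\ll x\log 2x$ (the factor $\log 2x$ majorising $\Lambda(n)$), and summing over the $\asymp\log x$ blocks yields $\ll x\log 2x\cdot\log x$; combined with the rest this gives an error of size $x\log 2xT\,\log 2x$, which for bounded $T$ and large $x$ is strictly weaker than the stated $x\log 2xT\,\log\log 3x$. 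The $\log\log$ needs a prime-counting input: by a Chebyshev/Brun--Titchmarsh type bound the number of primes $n$ with $2^{j}\le|x-n|<2^{j+1}$ is $\ll 2^{j}/j$ (higher prime powers being treated separately, as they are sparse), so each block contributes $\ll x\log 2x/j$ and $\sum_{j\lesssim\log x}1/j\ll\log\log 3x$. So the lemma as stated is not established by your argument, only a version with $\log 2x$ in place of $\log\log 3x$ (which, incidentally, would still suffice for the estimates $Z_{23}$ in this paper, but that is beside the point). A smaller caveat: several of your bounds ($x^{a}\log T/\log x\ll x\log T$, boundedness of $\zeta'/\zeta$ on the bottom edge, the top-edge expansion which is only valid for $-1\le\sigma\le 2$, and $\chi'/\chi(\sigma+it)=-\log(t/2\pi)+O(1/t)$, which fails when $|\sigma|\gg t$) implicitly assume $a=1+1/\log x$ is bounded, i.e.\ $x$ bounded away from $1$; this is repairable --- for $\log x\le 1/T$ the claim is trivial since then $x\le e$ and $\log 2T\min(T,1/\log x)=T\log 2T$ dominates $x\,N(T)$, and in the remaining range $a\ll T$ so the asymptotics can be adjusted --- but it should be said.
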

    If one fixes $x$ then this reduces to the original result of Landau as $T\tendsto\infty$.
    As an application of this result Gonek proves (under the RH) the following mean value for $\zeta$:
    \[ \widesum{0<\gamma\leq T}\abs{\zeta(\tfrac{1}{2}+i(\gamma+2\pi\alpha/\log T))}^{2}=\left(1-\left(\frac{\sin\pi\alpha}{\pi\alpha}\right)^{2}\right)\frac{T}{2\pi}\log^{2}T+\bigo{T\log^{7/4} T},\]
    where $T$ is large and $\alpha$ is real with $\lvert\alpha\rvert\leq\frac{1}{2\pi}\log T$.

    For Theorem~\ref{thmtwo} we need different tools. Since we are working with integrals the Gonek--Landau
    formula is not useful anymore. Instead we use a modified version, see~\cite{garunkstis2010}.
    \begin{lemma}[Modified Gonek Lemma]\label{modgonek}
        Suppose that $\sum_{n=1}^{\infty}a(n)n^{-s}$ converges for $\sigma>1$ and $a(n)=\bigo{n^{\epsilon}}$. Let $a=1+\log^{-1}T$. Then
        \begin{multline*}
                \frac{1}{2\pi i}\int_{a+i}^{a+iT}\left(\frac{m}{2\pi}\right)^{s}\Gamma(s)\exp\left(\delta\frac{\pi is}{2}\right)\sum_{n=1}^{\infty}\frac{a(n)}{n^{s}}\d{s}\\
                =\begin{cases}
                    \displaystyle\rwidesum{n\leq\frac{Tm}{2\pi}}a(n)\exp\left(-2\pi i\frac{n}{m}\right)+\bigo{m^{a}T^{1/2+\epsilon}},& \text{if } \delta = -1,\\
                    \bigo{m^{a}}, &	\text{if } \delta = +1.
                \end{cases}
        \end{multline*}
    \end{lemma}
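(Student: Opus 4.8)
Since the Dirichlet series $\sum_{n}a(n)n^{-s}$ need not continue past $\re s=1$, the plan is to first exploit its absolute convergence on $\re s=a>1$ to interchange summation and integration, reducing the left-hand side to $\sum_{n\ge1}a(n)J_{n}(\delta)$ with
\[
J_{n}(\delta)=\frac{1}{2\pi i}\int_{a+i}^{a+iT}\left(\frac{m}{2\pi n}\right)^{s}\Gamma(s)\exp\left(\delta\frac{\pi i s}{2}\right)\d{s},
\]
and only afterwards to move contours in the individual integrals $J_{n}(\delta)$, whose integrand is meromorphic (poles only at $s=0,-1,\dots$) and decays fast off a thin strip. Up to the substitution $y=(2\pi n/m)u$, the factor $\Gamma(s)\exp(\delta\pi is/2)$ is, for $0<\re s<1$, the Mellin transform of $e^{\delta iy}$, so the integrand concentrates near the saddle of $s\mapsto\Gamma(s)(m/2\pi n)^{s}e^{\delta\pi is/2}$, at $s_{0}=(2\pi n/m)e^{-\delta\pi i/2}$. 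For $\delta=+1$ this saddle has negative imaginary part and never meets the segment $\im s\in[1,T]$; indeed Stirling gives $\lvert\Gamma(a+it)\exp(\pi i(a+it)/2)\rvert\ll\lvert t\rvert^{a-1/2}e^{-\pi\lvert t\rvert}$ on $\re s=a$, whence $\lvert J_{n}(+1)\rvert\ll(m/2\pi n)^{a}$ and, summing against $\lvert a(n)\rvert\ll n^{\epsilon}$, the integral is $\bigo{m^{a}}$.

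For $\delta=-1$ the saddle is $2\pi in/m$ on the imaginary axis, which lies in the height range $[1,T]$ essentially when $n\le Tm/2\pi$ --- the range in the statement. I would first move the line of integration from $\re s=a$ to $\re s=\tfrac12$, crossing no poles of $\Gamma$; the two horizontal segments at $\im s=1$ and $\im s=T$ contribute, after summing against $a(n)$ over $n\le Tm/2\pi$, a total of $\bigo{m^{a}T^{1/2+\epsilon}}$ (on the segment at height $T$ the integrand has size $\asymp T^{\sigma-1/2}(m/2\pi n)^{\sigma}$, one source of the $T^{1/2}$). It remains to evaluate $\widetilde J_{n}=\frac{1}{2\pi i}\int_{1/2+i}^{1/2+iT}(m/2\pi n)^{s}\Gamma(s)e^{-\pi is/2}\d{s}$. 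On the critical line $\lvert\Gamma(\tfrac12+it)e^{-\pi is/2}\rvert=\sqrt{2\pi/(1+e^{-2\pi t})}$ is bounded for $t\ge0$ and decays as $t\to-\infty$, so the Mellin--Barnes identity $\frac{1}{2\pi i}\int_{(1/2)}\Gamma(s)z^{-s}\d{s}=e^{-z}$, applied with $z=(2\pi n/m)e^{i(\pi/2-\eta)}$ and then letting $\eta\to0^{+}$, gives $\frac{1}{2\pi i}\int_{(1/2)}(m/2\pi n)^{s}\Gamma(s)e^{-\pi is/2}\d{s}=e^{-2\pi in/m}$ in a regularised sense. Splitting $\int_{(1/2)}=\int_{1/2-i\infty}^{1/2+i}+\int_{1/2+i}^{1/2+iT}+\int_{1/2+iT}^{1/2+i\infty}$, the first tail is $\bigo{(m/n)^{1/2}}$, the middle is $\widetilde J_{n}$, and the last tail is $\bigo{(m/n)^{1/2}/\log(2+Tm/2\pi n)}$ by integration by parts --- on $\re s=\tfrac12$ the phase of the integrand has derivative $\asymp\log(tm/2\pi n)$, bounded away from $0$ for $t\ge T$ once $n\le Tm/2\pi$. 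Hence $\widetilde J_{n}=e^{-2\pi in/m}+\bigo{(m/n)^{1/2}}$ for $n\le Tm/2\pi$ (modulo the transition issue below); summing the error against $a(n)=\bigo{n^{\epsilon}}$ yields $\bigo{m^{a}T^{1/2+\epsilon}}$, and for $n>Tm/2\pi$ the saddle lies above height $T$ and a parallel argument gives $\sum_{n>Tm/2\pi}\lvert a(n)\widetilde J_{n}\rvert\ll m^{a}T^{1/2+\epsilon}$. Assembling the pieces gives the claimed identity.

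The step I expect to be the main obstacle is the uniform-in-$n$ analysis of $\widetilde J_{n}$ in the transition range $n\asymp Tm/2\pi$: there the stationary point $2\pi in/m$ of the $\Gamma$-integral collides with the endpoint $\im s=T$, integration by parts degenerates, and one must instead use a genuine stationary-phase estimate over a window of $n$ of width $\asymp\sqrt{Tm}$. This window, together with the horizontal segment at height $T$, is what pins the error at $\bigo{m^{a}T^{1/2+\epsilon}}$, and --- through its use in Theorem~\ref{thmtwo} --- is ultimately responsible for the failure to reach a positive proportion there.
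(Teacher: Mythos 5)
The paper does not actually prove this lemma: it is imported wholesale from~\cite{garunkstis2010}, and is a variant of Gonek's lemma from 1984, whose proof interchanges the sum and the integral and then runs a saddle-point analysis of each Gamma-integral \emph{directly on the line} $\re s=a$. Your overall plan (interchange, locate the saddle of $\Gamma(s)(m/2\pi n)^{s}e^{\delta\pi is/2}$ near $\im s=2\pi n/m$, dispose of $\delta=+1$ by the exponential decay on $\re s=a$) is the same strategy, but two steps of your organisation would fail as written. First, shifting every $J_{n}(-1)$ to $\re s=\tfrac12$ throws away the only available decay in $n$: on the horizontal segments (and on the half-line tails) the integrand has size about $T^{\sigma-1/2}(m/2\pi n)^{\sigma}$, whose maximum over $\sigma\in[\tfrac12,a]$ is attained at $\sigma=\tfrac12$ as soon as $n>Tm/2\pi$, giving a per-$n$ contribution of order $(m/n)^{1/2}$; since all you know is $a(n)\ll n^{\epsilon}$, the sum $\sum_{n>Tm/2\pi}\lvert a(n)\rvert (m/n)^{1/2}$ diverges, so "a parallel argument" for the large-$n$ range does not go through. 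Summability comes only from the factor $n^{-a}$ with $a>1$, which is why the standard proof keeps the analysis on $\re s=a$ (first-derivative test there when the saddle $2\pi n/m$ lies outside $[1,T]$, saddle-point evaluation when it lies inside).

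Second, the transition range $n\asymp Tm/2\pi$, which you explicitly flag and postpone, is not a peripheral technicality but the step that produces both the truncation of the main term at $n\leq Tm/2\pi$ and the error $O(m^{a}T^{1/2+\epsilon})$; without a uniform-in-$n$ stationary-phase asymptotic with explicit error terms (the role played by Gonek's auxiliary saddle-point lemma), the stated error term is simply not established, so the proof is incomplete at its crux. Two smaller points: the Mellin--Barnes evaluation on $\re s=\tfrac12$ with $z$ on the boundary ray $\arg z=\pi/2$ is only conditionally convergent and your "regularised sense" needs an actual justification (this is precisely the cancellation between $e^{-\pi is/2}$ and the decay of $\Gamma$); and your $\delta=+1$ computation really gives $O\bigl(m^{a}\sum_{n}\lvert a(n)\rvert n^{-a}\bigr)$, i.e.\ an extra factor of size roughly $\log T$ rather than the clean $O(m^{a})$ you assert (harmless in the applications, but it should be acknowledged). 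Finally, your closing remark attributes the failure to reach a positive proportion in Theorem~\ref{thmtwo} to this lemma's $T^{1/2+\epsilon}$ error; that is not the bottleneck --- the loss of a logarithm comes from the second-moment bound $T\log^{2}T$ of Proposition~\ref{prop22} against the first moment $T\log T$ of Proposition~\ref{prop21}.
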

    We also need the following version of the approximate functional equation for Dirichlet $L$-functions.
    First, denote by $G(k,\chi)$ the Gauss sum
    \[G(k,\chi)=\sum_{a=1}^{q}\chi(a)e^{2\pi iak/q}.\]
    We also write $G(1,\chi)=G(\chi)$.
    \begin{thm}[Lavrik~\cite{lavrik1968}]\label{thm:approx}
        Let $\chi$ be a primitive character mod $q$. For $s=\sigma+it$ with $0<\sigma<1$, $t>0$, and $x=\Delta\sqrt{\frac{qt}{2\pi}}$, $y=\Delta^{-1}\sqrt{\frac{qt}{2\pi}}$, and $\Delta\geq 1$, $\Delta\in\naturals$, we have
        \begin{equation}\label{eq:lapprox}
            L(s,\chi)=\sum_{n\leq x}\frac{\chi(n)}{n^{s}}+\bige(\chi)\left(\frac{q}{\pi}\right)^{\frac{1}{2}-s}\frac{\Gamma\bigl(\frac{1-s+\af}{2}\bigr)}{\Gamma\left(\frac{s+\af}{2}\right)}\sum_{n\leq y}\frac{\overline{\chi}(n)}{n^{1-s}}+R_{xy},
        \end{equation}
        with
        \begin{equation*}
            R_{xy}\ll\sqrt{q}\left(y^{-\sigma}+x^{\sigma-1}(qt)^{1/2-\sigma}\right)\log{2t},
        \end{equation*}
        and in particular, for $x=y$,
        \begin{equation*}
            R\ll x^{-\sigma}\sqrt{q}\log{2t}.
        \end{equation*}
        Here $\bige(\chi)=q^{-1/2}i^{\af}G(1,\chi)$, and
        \[\af= \frac{1-\chi(-1)}{2}.\]
    \end{thm}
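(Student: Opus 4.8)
The plan is to deduce Theorem~\ref{thm:approx} from the functional equation $L(s,\chi)=X(s)L(1-s,\overline\chi)$, where $X(s)=\bige(\chi)(q/\pi)^{1/2-s}\Gamma(\tfrac{1-s+\af}{2})/\Gamma(\tfrac{s+\af}{2})$, in the classical Hardy--Littlewood/Riemann--Siegel manner: express $L(s,\chi)$ as the partial sum $\sum_{n\le x}\chi(n)n^{-s}$ plus a remainder, and evaluate that remainder by stationary phase. By Stirling one has $\abs{X(\sigma+it)}\asymp(qt/2\pi)^{1/2-\sigma}$ uniformly for $\sigma$ in a fixed strip, with $X$ carrying the oscillation $\exp(it-it\log(qt/2\pi)+O(1))$; this is the ingredient that, in the end, reproduces the prefactor of the dual sum. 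The Gauss sum $\bige(\chi)=q^{-1/2}i^{\af}G(\chi)$ enters because the additive expansion $\chi(n)=G(\overline\chi)^{-1}\sum_{a\bmod q}\overline\chi(a)e^{2\pi ian/q}$ of a primitive character is exactly what makes Poisson summation applicable to the twisted partial sums, and the standard identities for primitive $\chi$ ($\abs{G(\chi)}^{2}=q$, $G(k,\chi)=\overline\chi(k)G(\chi)$, $G(\chi)G(\overline\chi)=\chi(-1)q$) combine with the phase factors $e^{\pm i\pi/4}$ from stationary phase to collapse all the resulting Gauss sums to the single constant $\bige(\chi)$.

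Concretely, I would first open $\chi$ additively and apply Poisson summation to each inner sum $\sum_{n\le x}e^{2\pi ian/q}n^{-s}$. This produces integrals $\int_{1}^{x}u^{-s}e^{2\pi i(a/q-k)u}\,du$; the phase $2\pi(a/q-k)u-t\log u$ has a stationary point at $u_{\ast}=\tfrac{qt}{2\pi(a-kq)}$, which lies in $(1,x)$ precisely when the integer $m:=a-kq$ satisfies $\tfrac{qt}{2\pi x}<m<\tfrac{qt}{2\pi}$. Evaluating each such integral by stationary phase gives, up to lower order, a fixed constant times $m^{s-1}$; summing over $a\bmod q$ and the integers $k$ amounts to a single sum over $m$, with $\overline\chi(a)=\overline\chi(m)$ putting the character back, so that the stationary-phase main terms assemble into $X(s)\sum_{m}\overline\chi(m)m^{s-1}$ over the appropriate range of $m$. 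Fixing the two cutoffs at $x=\Delta\sqrt{qt/2\pi}$ and $y=\Delta^{-1}\sqrt{qt/2\pi}$, so that $xy=qt/2\pi$ and the dual index runs exactly over $m\le y$, this yields
\[L(s,\chi)=\sum_{n\le x}\frac{\chi(n)}{n^{s}}+X(s)\sum_{m\le y}\frac{\overline\chi(m)}{m^{1-s}}+R_{xy}.\]

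The remainder $R_{xy}$ is built from: the frequencies $k$ for which the phase has no interior stationary point, where the integrals are controlled by their endpoint contributions at $u=1$ and $u=x$ and sum to the terms $\sqrt q\,x^{\sigma-1}(qt)^{1/2-\sigma}$ and $\sqrt q\,y^{-\sigma}$; the frequencies whose stationary point is near an endpoint, where the leading stationary-phase term must be replaced by its endpoint-corrected form; and the tails of the Stirling and Poisson expansions. The factor $\log 2t$ is the usual loss arising from the range of summation over $k$ and from weights of the shape $\sum 1/\abs{t/u+2\pi(a/q-k)}$ near the transition. Altogether $R_{xy}\ll\sqrt q\,(y^{-\sigma}+x^{\sigma-1}(qt)^{1/2-\sigma})\log 2t$, and in the symmetric case $\Delta=1$ the two terms coincide, giving $R\ll x^{-\sigma}\sqrt q\log 2t$; the hypothesis $\Delta\in\naturals$ is merely a convenient normalisation of the cutoffs and is not needed for the argument. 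I expect the main obstacle to be the second source above together with uniformity: one needs a stationary-phase estimate with an explicit error term that is uniform in $q$, $t$ and $\sigma\in(0,1)$ and remains valid when the stationary point degenerates into an endpoint of $[1,x]$, and then the careful accounting of the resulting boundary terms so as to recognise their sum as the claimed $R_{xy}$. (An equivalent but perhaps cleaner route is purely Mellin-theoretic: start from Perron's formula $\sum_{n\le x}\chi(n)n^{-s}=\tfrac{1}{2\pi i}\int_{(c)}L(s+w,\chi)x^{w}w^{-1}\,dw$, shift the contour leftward across the simple pole at $w=0$ with residue $L(s,\chi)$, invoke the functional equation on the shifted line, and expand $L(1-s-w,\overline\chi)$ into its absolutely convergent Dirichlet series; the kernel integrals $\tfrac{1}{2\pi i}\int X(s+w)(mx)^{w}w^{-1}\,dw$ are then evaluated by the very same Stirling-plus-stationary-phase input, with the cutoff $m\le y$ appearing according to the position of the saddle of $X(s+w)(mx)^{w}$ relative to $w=0$.)
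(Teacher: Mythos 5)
First, a point of comparison: the paper does not prove Theorem~\ref{thm:approx} at all --- it is quoted from Lavrik's paper, and the only original remark is that the restriction $\Delta\in\naturals$ may be relaxed to real $\Delta\geq 1$ (which you also note). So your proposal cannot be measured against an internal proof; it has to stand on its own, and as written it contains a genuine gap rather than just missing polish.

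The gap is that your own stationary-phase bookkeeping contradicts the conclusion you draw from it. Applying Poisson summation to the head sum $\sum_{n\leq x}e^{2\pi ian/q}n^{-s}$ gives integrals whose phase $2\pi(a/q-k)u-t\log u$ is stationary at $u_{*}=qt/(2\pi m)$ with $m=a-kq$, and $u_{*}\in(1,x)$ forces $qt/(2\pi x)<m<qt/(2\pi)$, i.e.\ $y<m<qt/2\pi$ since $qt/(2\pi x)=y$. These are exactly the \emph{complementary} frequencies: they do not assemble into the claimed dual sum over $m\leq y$, so the displayed approximate functional equation does not follow from the computation you describe. To obtain the sum over $m\leq y$ one must instead run the Poisson/van der Corput analysis on the segment $x<n\leq N$ with $N\asymp qt/2\pi$, whose dual frequencies do lie in $[1,y]$; and that in turn requires the prior truncation step $L(s,\chi)=\sum_{n\leq N}\chi(n)n^{-s}+O(\cdot)$, which rests on cancellation of $\chi$ over complete periods and is precisely where the uniform $\sqrt{q}$ and $\log 2t$ losses enter. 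Your sketch never performs this truncation; note also that the ``remainder'' $L(s,\chi)-\sum_{n\leq x}\chi(n)n^{-s}$ is not an absolutely convergent series for $0<\sigma<1$, so it cannot simply be ``evaluated by stationary phase'' as stated. (One could persist with the head-sum route and use conditional convergence to rewrite the stationary contributions with $m>y$ as $X(s,\chi)\bigl(L(1-s,\overline{\chi})-\sum_{m\leq y}\overline{\chi}(m)m^{s-1}\bigr)$, but that needs control of stationary-phase errors over an unbounded range of $m$ and is not what you wrote.) Your parenthetical Perron/Mellin alternative is closer in spirit to how uniform versions such as Lavrik's are actually proved, but it is only a sketch; as it stands the proposal does not establish the theorem.
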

    It is not hard to see that this formula is, in fact, valid for all real $\Delta\geq 1$.
    Approximate functional equations for imprimitive characters do exist, but they are more complicated.
    Therefore, we restrict our attention to primitive characters in Theorem~\ref{thmone}.

    \section{Proof of Theorem~\ref{thmone}} %

    The proof will follow the steps of \cite[Theorem 2]{gonek1993} and \cite[Theorem 1.9]{petridis2000} for the Riemann
    zeta function and $\mathrm{GL}_{2}$ $L$-functions.

    Two non-zero complex numbers $z$ and $w$ are linearly independent over the reals is equivalent to the quotient $z/w$
    being non-real, or that $\lvert z\bar{w}-\bar{z}w\rvert>0$. For us $z$ and $w$ are values of Dirichlet $L$-functions.
    Instead of looking at these functions at a single point, we will average over multiple points with a fixed real part
    $\sigma\in(\tfrac{1}{2},1)$ and the imaginary part at the height of the Riemann zeros.

    We are assuming the RH purely because it makes the proof simpler as expressions of the form $x^{\rho}$ become easier
    to deal with if we know the real part explicitly. On the other hand, the distribution of these specific points does
    not seem to have any impact on the rest of the proof. We suspect that the RH is not an essential requirement.
    In fact, following~\cite{gonek1984}, it might be possible to obtain the result without the RH by integrating
    \[ \frac{\zeta'}{\zeta}(s-\sigma)B(s,P)L(s,\chi_{1})\overline{L(s,\chi_{2})} \]
    over a suitable contour. This picks the desired points as residues of the integrand yielding the required sum.
    This idea is also used in the proof of Theorem~2.

    The proof will be divided into three propositions after which the main result follows easily.
    In the first proposition we want to calculate discrete mean values of sums of terms of the type
    $L(\sigma+i\gamma,\chi_{1})\overline{L(\sigma+i\gamma,\chi_{2})}$ and its complex conjugate. If we subtract one of
    these mean values from the other then each term is non-zero precisely when the two numbers are linearly independent
    over the reals. Hence we need to prove that the two mean values are not equal, which is the content of
    Proposition~\ref{prop3}. Finally, we get the main result by applying the Cauchy--Schwarz inequality to the difference
    of the mean values. Because of this we also need to estimate a sum of squares of the absolute values of the above
    quantities, that is,
    \[\abs{L(\sigma+i\gamma,\chi_{1})\overline{L(\sigma+i\gamma,\chi_{2})}-\overline{L(\sigma+i\gamma,\chi_{1})}L(\sigma+i\gamma,\chi_{2})}^{2}.\]
    This is done in Proposition~\ref{prop2}.

    The first problem in our proof is that the mean values are complex conjugates. In order to show that the difference
    is non-zero leads to determining whether $\im L(2\sigma,\chi_{1}\overline{\chi}_{2})\neq 0$, which does not always hold.
    Thus we need to introduce some kind of weighting in order to shove these sums off balance. We do this by multiplying
    by a finite Dirichlet polynomial, $B(s,P)$, which cancels some terms from either of the $L$-functions, depending on which
    mean value we are considering. We define
    \begin{equation}\label{eq:bsp}
        B(s,P)=\prod_{p\leq P}(1-\chi_{1}(p)p^{-s})(1-\chi_{2}(p)p^{-s})
    \end{equation}
    for some fixed prime $P$, depending only on $q$ and $\ell$, to be determined in Proposition~\ref{prop3}.
    Let us also assume that this Dirichlet polynomial has the expansion
    \[B(s,P)=\sum_{n\leq R}c_{n}n^{-s},\]
    for some $R$ depending on $P$. Since $\abs{c_{p}}\leq 2$ for any prime $p$, we have for all $n$ that
    \begin{equation}\label{eq:cestimate}
        \abs{c_{n}}\leq 2^{P}.
    \end{equation}
    We prove the following propositions.
    \begin{prop}\label{prop1}
        Assume the Riemann Hypothesis. With $s=\sigma+i\gamma$ we have
        \begin{equation}\label{prop1:eq1}
            \widesum{0<\gamma\leq T}B(s, P)L(s,\chi_{1})\overline{L(s,\chi_{2})}
            \sim N(T)\sum_{n=1}^{\infty}\frac{d_{n}\overline{\chi}_{2}(n)}{n^{2\sigma}},
        \end{equation}
        and
        \begin{equation}\label{prop1:eq2}
            \widesum{0<\gamma\leq T}B(s,P)\overline{L(s,\chi_{1})}L(s,\chi_{2})\sim N(T)\sum_{n=1}^{\infty}\frac{e_{n}\overline{\chi}_{1}(n)}{n^{2\sigma}},
        \end{equation}
        where
        \begin{equation*}
            B(s,P)L(s,\chi_{1})=\sum_{n=1}^{\infty}\frac{d_{n}}{n^{s}},\quad  B(s,P)L(s,\chi_{2})=\sum_{n=1}^{\infty}\frac{e_{n}}{n^{s}}.
        \end{equation*}
    \end{prop}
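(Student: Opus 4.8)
The plan is to substitute Lavrik's approximate functional equation~\eqref{eq:lapprox} for each of $L(\sigma+i\gamma,\chi_{1})$ and $L(\sigma+i\gamma,\chi_{2})$ into the sum on the left of~\eqref{prop1:eq1}, taking the balanced choice $x=y$ so that each main sum and each dual sum has length $\asymp\sqrt{\gamma}$, then conjugating the equation for $\chi_{2}$ and multiplying through by the fixed polynomial $B(s,P)$ of~\eqref{eq:bsp}. The summand then breaks into the \emph{diagonal} piece $B(s,P)\,(\mathrm{main}_{1})\,\overline{(\mathrm{main}_{2})}$, three \emph{mixed} pieces involving at least one dual sum, and remainder pieces containing $R$. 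Since the truncation points depend on $\gamma$, I would first split $(0,T]$ into the $\bigo{\log T}$ dyadic blocks $(T2^{-k-1},T2^{-k}]$; on a block of height $\asymp G$ one may fix the truncation lengths by choosing the parameter $\Delta\in[1,\sqrt{2}]$ in Theorem~\ref{thm:approx}, so that all the inner sums become genuinely finite of length $\asymp\sqrt{G}$, and there we use $N(T)\asymp T\log T$.

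For the diagonal piece, write $B(s,P)(\mathrm{main}_{1})=\sum_{j}d_{j}^{\ast}j^{-s}$, where $d_{j}^{\ast}=d_{j}$ below the truncation; pairing with $\overline{(\mathrm{main}_{2})}=\sum_{m}\overline{\chi}_{2}(m)m^{-\bar s}$ and summing over the zeros in a block, the terms with $j=m$ give the number of those zeros times $\sum_{j}d_{j}\overline{\chi}_{2}(j)j^{-2\sigma}$ truncated at $\asymp\sqrt{G}$. Because $\abs{d_{j}}\ll_{P}1$ (from~\eqref{eq:cestimate}) and $2\sigma>1$, the tail past the truncation, summed against the zero counts over all blocks, is $\bigo{T^{3/2-\sigma}\log T}=\lilo{N(T)}$, and the truncated sums reassemble to $N(T)\sum_{j}d_{j}\overline{\chi}_{2}(j)j^{-2\sigma}$. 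The terms with $j\neq m$ are handled by the Gonek--Landau formula, Lemma~\ref{thm:gonek}: on the Riemann Hypothesis $(m/j)^{i\gamma}=(j/m)^{1/2}(m/j)^{\rho}$, so by~\eqref{eq:landau} $\sum_{0<\gamma\leq T}(m/j)^{i\gamma}$ is small unless $j/m$ or $m/j$ is a prime power, and the compensating factor $(j/m)^{1/2}$ together with $\abs{d_{j}}\ll_{P}1$ makes the total contribution of the prime-power ratios $\bigo{T}=\lilo{N(T)}$; the three error terms in~\eqref{eq:landau} are estimated in the same spirit, the only point needing care being the term with $\pdist{m/j}$, for which one isolates the sparse set of ratios lying abnormally close to a prime power.

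The mixed pieces each carry an extra quotient of Gamma factors, of modulus $\asymp\gamma^{1/2-\sigma}$ but with a phase $\sim c\,\gamma\log\gamma$ coming from Stirling, so Lemma~\ref{thm:gonek} does not apply directly. In the dual--dual piece the two such phases cancel, leaving again a sum of $(\cdot)^{\rho}$ but now with the decay $\gamma^{1-2\sigma}$ to spare, so Lemma~\ref{thm:gonek} closes it and the contribution is $\lilo{N(T)}$. For main--dual and dual--main there is no cancellation, and here I would use Cauchy--Schwarz over the zeros in each block together with the second moment bound $\sum_{G<\gamma\leq 2G}\bigl\lvert\sum_{n\leq N}a_{n}n^{i\gamma}\bigr\rvert^{2}\ll(G\log G+N\log N)\sum_{n}\abs{a_{n}}^{2}$, which itself follows from Lemma~\ref{thm:gonek} upon opening the square. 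Since $\sum_{j}\abs{d_{j}^{\ast}j^{-\sigma}}^{2}\ll_{P}1$ while $\sum_{n\leq y}n^{2\sigma-2}\asymp\gamma^{\sigma-1/2}$, the Gamma decay $\gamma^{1/2-\sigma}$ exactly offsets the growth of the dual sum, so a block of height $\asymp G$ contributes $\ll_{P,q,\ell}G^{5/4-\sigma/2}\log G$, and summing over the $\bigo{\log T}$ blocks gives $\bigo{T^{5/4-\sigma/2}\log T}=\lilo{N(T)}$ because $\sigma>\tfrac12$. Finally the remainder pieces are negligible using $R\ll_{q}\gamma^{-\sigma/2}\log\gamma$, $\abs{B(\sigma+i\gamma,P)}\ll_{P}1$, and the crude size of the other factors. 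Only $N(T)\sum_{j}d_{j}\overline{\chi}_{2}(j)j^{-2\sigma}$ survives, which is~\eqref{prop1:eq1}; \eqref{prop1:eq2} follows by exchanging $\chi_{1}$ and $\chi_{2}$. The main obstacle is exactly the mixed pieces: the $\gamma\log\gamma$ phase forbids the Gonek--Landau formula there, forcing the weaker Cauchy--Schwarz route, which still suffices off the critical line but is also the source of the loss that prevents Theorem~\ref{thmtwo} from reaching a positive proportion.
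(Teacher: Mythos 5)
Your proposal is correct and follows essentially the same route as the paper: Lavrik's approximate functional equation, extraction of the diagonal as the main term, the Gonek--Landau formula (with RH turning $(n/m)^{i\gamma}$ into a power of $\rho$) for the off-diagonal, and Cauchy--Schwarz with discrete second-moment bounds together with the decay $\abs{X(s,\chi)}^{2}\asymp\gamma^{1-2\sigma}$ for the pieces involving dual sums. The only differences are bookkeeping: the paper aligns the truncation lengths via the unbalanced choices $\Delta=\sqrt{\ell}$ and $\Delta=\sqrt{q}R$ and exchanges the order of summation directly instead of using dyadic blocks, and it disposes of the dual--dual piece by the same Cauchy--Schwarz/partial-summation argument rather than your phase-cancellation appeal to Gonek--Landau (which would need extra care with the $\gamma$-dependent weight and rational ratios, though the simpler route covers it anyway).
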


    \begin{prop}\label{prop2}
        Suppose $s=\sigma+i\gamma$ and let
        \begin{equation*}
            A(\gamma)=B(s,P)\bigl(L(s,\chi_{1})\overline{L(s,\chi_{2})}-\overline{L(s,\chi_{1})}L(s,\chi_{2})\bigr).
        \end{equation*}
        Then, under the Riemann Hypothesis,
        \begin{equation}\label{prop2:eq1}
            \widesum{0<\gamma\leq T}\abs{A(\gamma)}^{2}\ll N(T).
        \end{equation}
    \end{prop}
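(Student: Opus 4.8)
\emph{Proof sketch.} Since $\overline{L(s,\chi_1)}L(s,\chi_2)=\overline{L(s,\chi_1)\overline{L(s,\chi_2)}}$ we have $A(\gamma)=2iB(s,P)\im\bigl(L(s,\chi_1)\overline{L(s,\chi_2)}\bigr)$, and as $B(\sigma+it,P)=\prod_{p\le P}(1-\chi_1(p)p^{-\sigma-it})(1-\chi_2(p)p^{-\sigma-it})$ is bounded on the line $\re s=\sigma$ by a constant depending only on $q$ and $\ell$, the arithmetic--geometric mean inequality gives
\[
    \abs{A(\gamma)}^2\ll\abs{L(\sigma+i\gamma,\chi_1)}^2\abs{L(\sigma+i\gamma,\chi_2)}^2\ll\abs{L(\sigma+i\gamma,\chi_1)}^4+\abs{L(\sigma+i\gamma,\chi_2)}^4 .
\]
So it is enough to show $\widesum{0<\gamma\leq T}\abs{L(\sigma+i\gamma,\chi)}^4\ll N(T)$ for a single primitive character $\chi$ modulo a prime.

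Following the method of~\cite{gonek1993}, I would feed in the approximate functional equation of Theorem~\ref{thm:approx} with $\Delta=1$ and $x=y=\qg$, writing $L(\sigma+i\gamma,\chi)=M(\gamma)+W(\gamma)\widetilde M(\gamma)+R(\gamma)$, where $M(\gamma)=\sum_{n\le\qg}\chi(n)n^{-\sigma-i\gamma}$, $\widetilde M(\gamma)=\sum_{n\le\qg}\overline\chi(n)n^{-(1-\sigma-i\gamma)}$, the functional equation factor obeys $\abs{W(\gamma)}\asymp\gamma^{1/2-\sigma}$ by Stirling's formula, and $R(\gamma)\ll\gamma^{-\sigma/2}\log\gamma$. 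By Minkowski's inequality in $\ell^{4}$ it suffices to bound each of $\widesum{0<\gamma\leq T}\abs{M(\gamma)}^{4}$, $\widesum{0<\gamma\leq T}\abs{W(\gamma)\widetilde M(\gamma)}^{4}$ and $\widesum{0<\gamma\leq T}\abs{R(\gamma)}^{4}$. The last is $\ll\widesum{0<\gamma\leq T}\gamma^{-2\sigma}\log^{4}\gamma\ll1$. For the first, expand $\abs{M(\gamma)}^{4}=M(\gamma)^{2}\overline{M(\gamma)}^{2}$ into a fourfold sum over $m_1,m_2,n_1,n_2\le\qg$, collect the $\gamma$-dependence as $(n_1n_2/m_1m_2)^{i\gamma}$, and sum over the zeros; under the Riemann Hypothesis $(\nu/\mu)^{i\gamma}=(\mu/\nu)^{1/2}(\nu/\mu)^{\rho}$, so each inner sum is evaluated by the Gonek--Landau formula of Lemma~\ref{thm:gonek}, the $\gamma$-dependent cutoff being handled by recasting $m_i,n_i\le\qg$ as $\gamma\ge(2\pi/q)\max(m_i,n_i)^{2}$ and applying Lemma~\ref{thm:gonek} on a subinterval of $(0,T]$ by subtraction. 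The diagonal $n_1n_2=m_1m_2$ contributes $\ll N(T)\sum_{\mu\ge1}d(\mu)^{2}\mu^{-2\sigma}\ll N(T)$, the series converging because $\sigma>\tfrac12$; the off-diagonal, summed against $d(\mu)d(\nu)(\mu\nu)^{-\sigma}$ over $\mu,\nu\ll T$ and split into near-diagonal pairs (those with $\nu/\mu$ close to $1$) and the rest, is $\ll T^{2-2\sigma}(\log T)^{O(1)}+T^{3/2-\sigma}(\log T)^{O(1)}=\lilo{N(T)}$, again since $\sigma>\tfrac12$. The term involving $W(\gamma)\widetilde M(\gamma)$ is the same computation with $\sigma$ replaced by $1-\sigma$ in the Dirichlet polynomial: the diagonal sum $\sum_{\mu\le\gamma}d(\mu)^{2}\mu^{2\sigma-2}\asymp\gamma^{2\sigma-1}\log^{3}\gamma$ now diverges but is compensated by $\abs{W(\gamma)}^{4}\asymp\gamma^{2-4\sigma}$, leaving $\widesum{0<\gamma\leq T}\gamma^{1-2\sigma}\log^{3}\gamma\ll T^{2-2\sigma}\log^{4}T=\lilo{N(T)}$, and the off-diagonal is smaller still. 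Combining the three pieces gives the required bound.

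The only real difficulty is this off-diagonal bookkeeping: carrying the $\gamma$-dependent truncation $\qg$ through the three error terms of Lemma~\ref{thm:gonek}, and separating the pairs $(\mu,\nu)$ with $\nu/\mu$ near $1$ from those with $\nu/\mu$ near a higher prime power, since the crude bound $\pdist{\nu/\mu}^{-1}\le\mu$ is only affordable after such a split. Everything else is routine, the hypothesis $\sigma>\tfrac12$ being used precisely to make $\sum d(\mu)^{2}\mu^{-2\sigma}$ converge and to keep the error exponents $2-2\sigma$ and $\tfrac32-\sigma$ below $1$. An alternative route avoiding the approximate functional equation is to integrate $\frac{\zeta'}{\zeta}(w-\sigma+\tfrac12)L(w,\chi_1)L(2\sigma-w,\overline\chi_1)L(w,\chi_2)L(2\sigma-w,\overline\chi_2)$ around a narrow rectangle straddling the line $\re w=\sigma$: the residues reproduce $\widesum{0<\gamma\leq T}m_\gamma\abs{L(\sigma+i\gamma,\chi_1)}^{2}\abs{L(\sigma+i\gamma,\chi_2)}^{2}$ (with $m_\gamma$ the multiplicity), the two vertical sides are $\ll T\log T$ by the classical bound $\int_{-T}^{T}\abs{L(c+it,\chi)}^{4}\dd t\ll_{c} T$ for fixed $c>\tfrac12$ together with the Riemann Hypothesis estimate $\frac{\zeta'}{\zeta}(\tfrac12\pm\delta+it)\ll\log t$, and the horizontal sides are $\ll T^{2-2\sigma+\bige}$ by convexity.
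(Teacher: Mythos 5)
Your argument is correct and runs on the same engine as the paper's: approximate functional equation, expansion into a fourfold Dirichlet sum, diagonal terms giving $N(T)\sum d(r)^{2}r^{-2\sigma}\ll N(T)$, off-diagonal terms via the Gonek--Landau formula with the $\gamma$-dependent truncation handled by subtraction and the $\pdist{\cdot}$-term handled by splitting ratios near $1$ from those near higher prime powers. The one genuine difference is your opening reduction: writing $A(\gamma)=2iB(s,P)\im\bigl(L(s,\chi_1)\overline{L(s,\chi_2)}\bigr)$ and using $2ab\le a^{2}+b^{2}$ to replace the mixed moment $\sum\abs{L_1}^{2}\abs{L_2}^{2}$ by two pure fourth moments of a single $L$-function. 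The paper instead estimates the mixed moment directly, so its off-diagonal coefficients are the convolution $(\chi_1\ast\chi_2)(n)$ rather than your $d(n)\chi(n)$; since both are $\ll d(n)\ll n^{\epsilon}$ the subsequent analysis is identical, but your reduction does tidy the bookkeeping (and your Minkowski-in-$\ell^{4}$ splitting plays the role of the paper's Cauchy--Schwarz on cross terms). Two small caveats: (i) the off-diagonal also contains the Landau main term $-\tfrac{T}{2\pi}\Lambda(\nu/\mu)$ from prime-power ratios, which contributes $O(T)$ rather than your stated $T^{2-2\sigma+\epsilon}+T^{3/2-\sigma+\epsilon}$ -- still $\lilo{N(T)}$, so the conclusion is unaffected; (ii) since $W(\gamma)$ varies with $\gamma$, "compensating" the $\widetilde M$-diagonal by $\abs{W(\gamma)}^{4}\asymp\gamma^{2-4\sigma}$ must be made rigorous by partial summation against the bound $\sum_{\gamma\le t}\abs{\widetilde M(\gamma)}^{4}\ll t^{2\sigma-1+\epsilon}N(t)$, using a derivative estimate of the type $\frac{d}{d\gamma}\abs{X(s,\chi)}^{4}\ll\gamma^{1-4\sigma}$, exactly as the paper does. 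Your alternative contour-integration route (with $\frac{\zeta'}{\zeta}(w-\sigma+\tfrac12)$ and continuous fourth moments on vertical lines to the right of $\tfrac12$) is plausible but only sketched; it is not needed given the first argument.
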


    \begin{prop}\label{prop3}
        Under the Riemann Hypothesis we can find a prime $P$ such that
        \begin{equation}\label{prop3:eq1}
            \widesum{0<\gamma\leq T} A(\gamma)\sim C\cdot N(T)
        \end{equation}
        for some non-zero constant $C$.
    \end{prop}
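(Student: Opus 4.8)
The plan is to derive Proposition~\ref{prop3} directly from Proposition~\ref{prop1}. First I would use linearity: since
\[
A(\gamma)=B(s,P)L(s,\chi_{1})\overline{L(s,\chi_{2})}-B(s,P)\overline{L(s,\chi_{1})}L(s,\chi_{2}),
\]
summing over $0<\gamma\leq T$ and inserting~\eqref{prop1:eq1} and~\eqref{prop1:eq2} gives
\[
\widesum{0<\gamma\leq T}A(\gamma)\sim N(T)\,(S_{1}-S_{2}),\qquad
S_{1}=\sum_{n=1}^{\infty}\frac{d_{n}\overline{\chi}_{2}(n)}{n^{2\sigma}},\quad
S_{2}=\sum_{n=1}^{\infty}\frac{e_{n}\overline{\chi}_{1}(n)}{n^{2\sigma}},
\]
both series converging absolutely because $d_{n},e_{n}\ll_{P}n^{\epsilon}$ and $2\sigma>1$. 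Thus everything reduces to choosing the prime $P$ so that $C:=S_{1}-S_{2}\neq0$; note that the Riemann Hypothesis plays no role in this last step.

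Next I would evaluate $S_{1}$ and $S_{2}$ as Euler products. By~\eqref{eq:bsp} and the Euler product of $L(s,\chi_{1})$, the $p$-th local factor of $B(s,P)L(s,\chi_{1})=\sum_{n}d_{n}n^{-s}$ is $1-\chi_{2}(p)p^{-s}$ for $p\leq P$ and $(1-\chi_{1}(p)p^{-s})^{-1}$ for $p>P$. Since $d_{n}\overline{\chi}_{2}(n)$ is multiplicative, twisting factor by factor and writing $\psi=\chi_{1}\overline{\chi}_{2}$ (so $\psi(q)=\psi(\ell)=0$) should give, for $2\sigma>1$,
\[
S_{1}=\prod_{\substack{p\leq P\\ p\neq\ell}}(1-p^{-2\sigma})\cdot\prod_{p>P}\bigl(1-\psi(p)p^{-2\sigma}\bigr)^{-1},
\]
where the factor at $p=\ell$ (when $\ell\leq P$) disappears because $\chi_{2}(\ell)=0$, while $|\chi_{2}(p)|=1$ at the other primes $p\leq P$. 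The same computation with $\chi_{1}$ and $\chi_{2}$ interchanged should give
\[
S_{2}=\prod_{\substack{p\leq P\\ p\neq q}}(1-p^{-2\sigma})\cdot\prod_{p>P}\bigl(1-\overline{\psi(p)}p^{-2\sigma}\bigr)^{-1}.
\]

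Then I would compare $S_{2}$ with $\overline{S_{1}}$: the infinite products coincide and the finite factors are real, so $S_{2}=\lambda\,\overline{S_{1}}$ with
\[
\lambda=\frac{\prod_{p\leq P,\,p\neq q}(1-p^{-2\sigma})}{\prod_{p\leq P,\,p\neq\ell}(1-p^{-2\sigma})}>0.
\]
If $P\geq\min(q,\ell)$ the two finite products omit \emph{different} Euler factors --- one at $q$, one at $\ell$ --- and since $q^{-2\sigma}$ and $\ell^{-2\sigma}$ lie in $(0,1)$ and are distinct (as $q\neq\ell$), we get $\lambda\neq1$; explicitly $\lambda=(1-\ell^{-2\sigma})/(1-q^{-2\sigma})$ when $q,\ell\leq P$, and $\lambda=1-\ell^{-2\sigma}$ or $(1-q^{-2\sigma})^{-1}$ when only one of them is $\leq P$. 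Fixing any such prime $P$ (for instance $P=\max(q,\ell)$), since $\abs{S_{2}}=\lambda\abs{S_{1}}$ the reverse triangle inequality gives
\[
\abs{C}=\abs{S_{1}-S_{2}}\geq\bigl|\,\abs{S_{1}}-\abs{S_{2}}\,\bigr|=\abs{1-\lambda}\,\abs{S_{1}}>0,
\]
since $\abs{S_{1}}$ is a convergent product of strictly positive, hence nonzero, factors at $2\sigma>1$. This would establish~\eqref{prop3:eq1} with the nonzero constant $C=S_{1}-S_{2}$.

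I do not expect a serious obstacle here: given Proposition~\ref{prop1}, the only point needing care is the bookkeeping of the local factors at the ramified primes $p\in\{q,\ell\}$ and at $p=P$, and the rest is formal manipulation of absolutely convergent Euler products. Conceptually, the polynomial $B(s,P)$ is introduced precisely to break the symmetry $S_{2}=\overline{S_{1}}$ that would hold for $B\equiv1$ --- in which case one would be reduced to proving $\im L(2\sigma,\chi_{1}\overline{\chi}_{2})\neq0$, which can fail --- by deleting the Euler factor at $\ell$ from one mean value and the one at $q$ from the other, tilting the balance by the harmless real scalar $\lambda\neq1$.
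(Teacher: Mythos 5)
Your argument is correct and is essentially the paper's own proof: reduce via Proposition~\ref{prop1} to showing $D-E\neq 0$, compute both sums as Euler products in which $B(s,P)$ deletes the factor at $\ell$ from one and at $q$ from the other, and conclude by comparing absolute values (your $S_{2}=\lambda\overline{S_{1}}$ with $\lambda\neq 1$ is just a repackaging of the paper's contradiction $(1-q^{-2\sigma})/(1-\ell^{-2\sigma})=1$). The only cosmetic difference is that the paper fixes $P$ large enough that both $q,\ell\leq P$, while you observe $P\geq\min(q,\ell)$ already suffices.
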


    \begin{proof}[Proof of Theorem~\ref{thmone}]
    By the Cauchy--Schwarz inequality and Propositions~\ref{prop2}~and~\ref{prop3}
    \begin{equation}\label{thmoneproof}
            \widesum{0<\gamma\leq T\\ A(\gamma)\neq 0}1\geq\frac{\abs{\sum_{0<\gamma\leq T}A(\gamma)}^{2}}{\sum_{0<\gamma\leq T}\abs{A(\gamma)}^{2}}\gg\frac{\abs{C}^{2}N(T)^{2}}{N(T)}=\abs{C}^{2} N(T).
        \end{equation}
    This proves that a positive proportion of the $A(\gamma)$'s are non-zero; in particular, for the same $\gamma$'s, $L(s,\chi_{1})$ and $L(s,\chi_{2})$ are linearly independent over the reals.
    \end{proof}

    \subsection{Proof of Proposition~\ref{prop1}}
    As the $d_{n}$'s contain only products of characters, we have $d_{n}=\bigo{1}$ (see proof of Proposition~\ref{prop3} for the calculation).
    In particular they define a multiplicative arithmetic function. We define, for a fixed $t$,
    \[B(s,P)\widesum{n\leq \qlt}\chi_{1}(n)n^{-s}=\widesum{n\leq R\qlt}d_{n}'n^{-s}.\] We have
    \begin{equation}
        d_{n} = \widesum{n=km}c_{k}\chi_{1}(m),
    \end{equation}
    and hence for $n\leq R\qlt$
    \begin{equation}\label{eq:dnprimedef}
        d_{n}' = \widesum{n=km\\ k\leq R}c_{k}\chi_{1}(m).
    \end{equation}
    From this it follows that $d_{n}=d_{n}'$ for $n\leq\qlt$. We also need to show that $d_{n}'\ll 1$.
    Let $p_{1},\dotsc,p_{h}$, for some $h>1$, denote all the primes below $P$ in an increasing order. Define $\widetilde{P}=p_{1}\dotsm p_{h}P$.
    From the product representation of $B(s,P)$, equation~\eqref{eq:bsp}, we see that $c_{n}=0$ for $n>1$, if $n$ contains any prime factors greater than $P$.
    This happens, in particular, if $(n,\widetilde{P})=1$. Hence for such $n$, $d_{n}'=\chi_{1}(n)$.
    Since $B(s,P)$ has a finite Euler product of degree two we have $c_{p^{j}}=0$ for any prime $p$ and $j\geq 3$.
    Thus we only need to consider $n>1$ with $n=p_{1}^{\alpha_{1}}\dotsm p_{h}^{\alpha_{h}}P^{\alpha_{0}}$, where $0\leq \alpha_{i}\leq 2$ for all $i$.
    The number of summands in~\eqref{eq:dnprimedef} is at most $(h+1)^{3}$. By~\eqref{eq:cestimate}, we find that $\abs{d_{n}'}\leq 2^{P}(h+1)^{3}$.
    In particular, $d_{n}'\ll 1$ as required.
    The approximate functional equation~\eqref{eq:lapprox} for $\chi_{1}$ with $\Delta=\sqrt{\ell}$ gives
    \begin{equation*}
        L(s, \chi_{1}) = \widesum{n\leq \qlt}\chi_{1}(n)n^{-s}+ X(s,\chi_{1})\widesum{n\leq\qtoverl}\overline{\chi}_{1}(n)n^{s-1}
        + \bigo{t^{-\sigma/2}\log t+t^{-1/4}},
    \end{equation*}
    where
    \begin{equation*}
        X(s,\chi)=\bige(\chi)\left(\frac{q}{\pi}\right)^{1/2-s}\frac{\Gamma\bigl(\frac{1-s+\af}{2}\bigr)}{\Gamma\left(\frac{s+\af}{2}\right)}.
    \end{equation*}
    Similarly for $\chi_{2}$ with $\Delta=\sqrt{q}R$ we get
    \begin{equation*}
        L(s, \chi_{2}) = \widesum{n\leq R\qlt}\chi_{2}(n)n^{-s}+ X(s,\chi_{2})\widesum{n\leq\frac{1}{R}\ltoverq}\overline{\chi}_{2}(n)n^{s-1} + \bigo{t^{-\sigma/2}\log{t}+t^{-1/4}}.
    \end{equation*}
    We can now expand the left-hand side in \eqref{prop1:eq1} to
    \begin{equation}\label{prop1:expanded}
        \begin{multlined}
            \widesum{0<\gamma\leq T}B(s,P)\\
            \times\Biggl(\rwidesum{n\leq\qlg}\chi_{1}(n)n^{-s}+ X(s,\chi_{1})\widesum{n\leq\qgoverl}\overline{\chi}_{1}(n)n^{s-1}+\bigo{\gamma^{-\sigma/2}\log \gamma+\gamma^{-1/4}}\Biggr)\\
            \times\Biggl(\rwidesum{n\leq R\qlg}\overline{\chi}_{2}(n)n^{-\overline{s}}+\overline{ X(s,\chi_{2})}\widesum{n\leq\frac{1}{R}\lgoverq}\chi_{2}(n)n^{\overline{s}-1}+\bigo{\gamma^{-\sigma/2}\log \gamma+\gamma^{-1/4}}\Biggr).
        \end{multlined}
    \end{equation}
    Denote the sum with $\chi_{1}$ and $\overline{\chi}_{2}$ by $M(T)$. We will take care of the other sums at the end of the proof.
    The main term comes from the diagonal entries of $M(T)$. First, write
    \begin{align}\label{prop1:maineq}
        M(T) &= \widesum{0<\gamma\leq T}B(s,P)\widesum{n\leq\qlg}\chi_{1}(n)n^{-s}\widesum{n\leq R\qlg}\overline{\chi}_{2}(n)n^{-\overline{s}}\notag\\
        &=\lwidesum{0<\gamma\leq T}\rwidesum{n\leq R\qlg}d_{n}'n^{-\sigma-i\gamma}\widesum{n\leq R\qlg}\overline{\chi}_{2}(n)n^{-\sigma+i\gamma}.\notag\\
        \intertext{Then, we separate the diagonal terms}
        M(T)&=\lwidesum{0<\gamma\leq T}\biggl(\rwidesum{n\leq R\qlg}\frac{d_{n}'\overline{\chi}_{2}(n)}{n^{2\sigma}}+\lwidesum{n\neq m}[R\qlg]\frac{d_{m}'\overline{\chi}_{2}(n)}{(nm)^{\sigma}}\left(\frac{n}{m}\right)^{i\gamma}\biggr)\notag\\ &= Z_{1}+Z_{2}.
    \end{align}
    The asymptotics in~\eqref{prop1:eq1} come from $Z_{1}$. We have
    \begin{align*}
        Z_{1} &=\lwidesum{0<\gamma\leq T}\biggl(\sum_{n=1}^{\infty}\frac{d_{n}\overline{\chi}_{2}(n)}{n^{2\sigma}}-\widesum{n>R\qlg}\frac{d_{n}\overline{\chi}_{2}(n)}{n^{2\sigma}}+\widesum{n\leq R\qlg}\frac{(d_{n}'-d_{n})\overline{\chi}_{2}(n)}{n^{2\sigma}}\biggr)\\
        &= N(T)\sum_{n=1}^{\infty}\frac{d_{n}\overline{\chi}_{2}(n)}{n^{2\sigma}}+C_{1}+C_{2}.
    \end{align*}
    We need to estimate $C_{1}$ and $C_{2}$. For $C_{1}$ we have
    \begin{equation*}
        C_{1} \ll \lwidesum{0<\gamma\leq T}\rwidesum{n>\sqrt{\gamma}}n^{-2\sigma}\ll\widesum{0<\gamma\leq T}\gamma^{1/2-\sigma}=\lilo{N(T)}.
    \end{equation*}
    Similarly,
    \begin{equation*}
        C_{2}\ll\lwidesum{0<\gamma\leq T}\rwidesum{n>\qlg}n^{-2\sigma}=\lilo{N(T)}.
    \end{equation*}
    To estimate $Z_{2}$ we wish to exchange the order of summation and apply the Gonek--Landau formula~\eqref{eq:landau}. Splitting and rewriting $Z_{2}$ in terms of the zeros of $\zeta$ we get
    \begin{align*}
        Z_{2} &=\lwidesum{0<\gamma\leq T}\sum_{n\leq R\qlg}\sum_{m<n}\left(\frac{d_{m}'\overline{\chi}_{2}(n)}{n^{\sigma+1/2}m^{\sigma-1/2}}\left(\frac{n}{m}\right)^{1/2+i\gamma}+\frac{d_{n}'\overline{\chi}_{2}(m)}{n^{\sigma+1/2}m^{\sigma-1/2}}\overline{\left(\frac{n}{m}\right)^{1/2+i\gamma}}\right)\\
        &=\lwidesum{n\leq R\qlT}\sum_{m<n}\sum_{\frac{2\pi n^{2}}{qlR^{2}}\leq\gamma\leq T}\left(\frac{d_{m}'\overline{\chi_{2}}(n)}{n^{\sigma+1/2}m^{\sigma-1/2}}\left(\frac{n}{m}\right)^{\rho}+\frac{d_{n}'\overline{\chi}_{2}(m)}{n^{\sigma+1/2}m^{\sigma-1/2}}\overline{\left(\frac{n}{m}\right)^{\rho}}\right).
    \end{align*}
    To apply the Gonek--Landau formula we split the innermost sum to $0<\gamma\leq T$ and $0<\gamma\leq 2\pi n^{2}/q\ell R^{2}$. Hence, we can write
    \[ Z_{2} = Z_{21} + Z_{22} + Z_{23} + Z_{24} + Z_{25},\]
    with
    \begin{align*}
        Z_{21} &= -\frac{T}{2\pi}\lwidesum{n\leq R\qlT}\sum_{m<n}\frac{d_{m}'\overline{\chi}_{2}(n)+d_{n}'\overline{\chi}_{2}(m)}{n^{\sigma+1/2}m^{\sigma-1/2}}\Lambda\left(\frac{n}{m}\right),\displaybreak[0]\\
        Z_{22} &\ll \sum_{n\leq R\qlT}\sum_{m<n}\frac{n^{2}\Lambda(n/m)}{n^{\sigma+1/2}m^{\sigma-1/2}},\displaybreak[0]\\
        Z_{23} &\ll \sum_{n\leq R\qlT}\sum_{m<n}\frac{1}{n^{\sigma+1/2}m^{\sigma-1/2}}\frac{n}{m}\log\frac{2nT}{m}\,\log\log\frac{3n}{m},\displaybreak[0]\\
        Z_{24} &\ll \sum_{n\leq R\qlT}\sum_{m<n}\frac{1}{n^{\sigma+1/2}m^{\sigma-1/2}}\log\frac{n}{m}\,\min\left(T,\frac{n/m}{\pdist{n/m}}\right),\displaybreak[0]\\
        \shortintertext{and}
        Z_{25} &\ll \log{T}\lwidesum{n\leq R\qlT}\sum_{m<n}\frac{1}{n^{\sigma+1/2}m^{\sigma-1/2}}\min\left(T,\frac{1}{\log(n/m)}\right).
    \end{align*}
    We begin by estimating $Z_{21}$. The only non-vanishing terms are with $m|n$. Thus we write $n=km$ and obtain
    \begin{equation*}
        Z_{21}\ll \frac{T}{2\pi}\lwidesum{k\leq R\qlT}\sum_{m<\frac{R}{k}\qlT}\frac{\Lambda(k)}{k^{\sigma+1/2}m^{2\sigma}}
        \ll\frac{T}{2\pi}\widesum{k\leq R\qlT}k^{\epsilon-\sigma-1/2}\widesum{m<\frac{R}{k}\qlT}m^{-2\sigma},
    \end{equation*}
    since $\Lambda(k)\ll k^{\epsilon}$ for any $\epsilon>0$. Since both sums are partial sums of convergent series we get $Z_{21}=\bigo{T}$.
    Working similarly with $Z_{22}$ gives
    \begin{align*}
        Z_{22} &\ll \lwidesum{k\leq R\qlT}\sum_{m<\frac{R}{k}\qlT}\frac{\Lambda(k)}{k^{\sigma-3/2}m^{2\sigma-2}}
        \ll \widesum{k\leq R\qlT}k^{3/2-\sigma+\epsilon}\widesum{m<\frac{R}{k}\qlT}m^{2-2\sigma}\\
        &\ll \widesum{k\leq R\qlT}k^{3/2-\sigma+\epsilon}\biggl(\biggl(\frac{T^{1/2}}{k}\biggr)^{3-2\sigma}+1\biggr)
        \ll T^{\frac{3-2\sigma}{2}}\widesum{k\ll T^{1/2}}k^{\sigma-3/2+\epsilon}=\bigo{T}.
    \end{align*}
    For $Z_{23}$ we get
    \begin{align*}
        Z_{23}&\ll\log T\,\log\log T\lwidesum{n\leq R\qlT}\frac{1}{n^{\sigma-1/2}}\sum_{m<n}\frac{1}{m^{\sigma+1/2}}\\
        &\ll\log T\,\log\log T\lwidesum{n\leq R\qlT}\frac{1}{n^{\sigma-1/2}}=\lilo{N(T)}.
    \end{align*}
    In order to estimate $Z_{24}$ we write $n=um+r$, where $-m/2<r\leq m/2$. Hence
    \begin{equation}\label{eq:disttoprime}
        \pdist*{u+\frac{r}{m}}=\begin{cases}\frac{\abs{r}}{m},&\text{if $u$ is a prime power and $r\neq 0$,}\\ \geq\frac{1}{2},&\text{otherwise.}\end{cases}
    \end{equation}
    Let $c=R\sqrt{q\ell/2\pi}$ then $n/m\leq n\leq c\sqrt{T}$, and so
    \begin{align*}
        Z_{24} &\ll\log T\lwidesum{n\leq cT^{1/2}}\sum_{m<n}\frac{1}{n^{\sigma+1/2}m^{\sigma-1/2}}\frac{n}{m}\frac{1}{\pdist{n/m}}\\
        &\ll\log T\lwidesum{m\leq cT^{1/2}}\sum_{u\leq\lfloor cT^{1/2}/m\rfloor+1}\sum_{-\frac{m}{2}<r\leq\frac{m}{2}}\frac{1}{m^{\sigma+1/2}(um+r)^{\sigma-1/2}}\frac{1}{\langle u+\frac{r}{m}\rangle},\\
        \intertext{and then evaluate the sum over $r$ depending on whether $u$ is a prime power or not to get}
        &\ll\log T\lwidesum{m\leq cT^{1/2}}\sum_{u\leq\lfloor cT^{1/2}/m\rfloor +1}\left(\Lambda(u)\frac{m\log m}{m^{\sigma+1/2}(um)^{\sigma-1/2}}+\frac{m}{m^{\sigma+1/2}(um)^{\sigma-1/2}}\right)\\
        &\ll\log T\lwidesum{m\leq cT^{1/2}}\frac{\log m}{m^{2\sigma-1}}\sum_{u\ll cT^{1/2}/m}\frac{u^{\epsilon}}{u^{\sigma-1/2}}=\bigo{T}.
    \end{align*}
    Finally, for $Z_{25}$ set $m=n-r$, $1\leq r\leq n-1$. So in particular $\log(n/m)>-\log(1-r/n)>r/n$. Hence,
    \begin{align*}
        Z_{25} &\ll\log T\lwidesum{n\leq cT^{1/2}}\sum_{1\leq r< n}\frac{1}{n^{\sigma+1/2}(n-r)^{\sigma-1/2}}\frac{n}{r}\\
        &\ll\log T\lwidesum{n\leq cT^{1/2}}\frac{1}{n^{\sigma-1/2}}\sum_{r\leq n-1}\frac{1}{r}=\bigo{T}.
    \end{align*}
    It remains to estimate all the other terms in~\eqref{prop1:expanded}. By repeating the analysis done for $Z_{1}$ and $Z_{2}$ we obtain the following estimates
    \begin{equation}\label{prop1:firstest1}
        \sum_{0<\gamma\leq T}\abs[\bigg]{\rwidesum{n\leq R\qlg}d_{n}'n^{-\sigma-i\gamma}}^{2}\ll N(T),
    \end{equation}
    and
    \begin{equation}\label{prop1:firstest2}
        \sum_{0<\gamma\leq T}\abs[\bigg]{\rwidesum{n\leq R\qlg}\overline{\chi}_{2}(n)n^{-\sigma+i\gamma}}^{2}\ll N(T).
    \end{equation}
    With trivial changes to the above argument we get,
    \begin{equation}\label{prop1:secest1}
        \sum_{0<\gamma\leq T}\abs[\bigg]{\rwidesum{n\leq\qgoverl}\chi_{1}(n)n^{\sigma-1+i\gamma}}^{2}\ll T^{\sigma-1/2}N(T),
    \end{equation}
    and
    \begin{equation}\label{prop1:secest2}
        \sum_{0<\gamma\leq T}\abs[\bigg]{\rwidesum{n\leq\frac{1}{R}\lgoverq}\overline{\chi}_{2}(n)n^{\sigma-1-i\gamma}}^{2}\ll T^{\sigma-1/2}N(T).
    \end{equation}
    We also need to estimate the order of growth of the derivative in $t$ of $\abs{X(s,\chi)}^{2}$.
    First, notice that $\abs{\bige(\chi)}=1$, so
    \begin{equation*}
        \abs{X(s,\chi)}=\left(\frac{q}{\pi}\right)^{1/2-\sigma}\abs*{\Gamma\left(\frac{1-s+\af}{2}\right)}\abs*{\Gamma\left(\frac{s+\af}{2}\right)}^{-1}.
    \end{equation*}
    By Stirling asymptotics
    \begin{equation}\label{eq:phiestimate}
        \abs{X(s,\chi)}^{2}\sim A\left(\frac{q}{\pi}\right)^{1-2\sigma}\gamma^{1-2\sigma},
    \end{equation}
    as $\overline{\Gamma(z)}=\Gamma(\overline{z})$, where $A$ is some non-zero constant.
    Thus, with $\psi=\Gamma'/\Gamma$,
    \begin{align}\label{prop1:diffestimate}\notag
        \frac{d}{d\gamma}\abs{X(s,\chi)}^{2} &=\abs{X(s,\chi)}^{2}\frac{i}{2}\biggl(\psi\biggl(\overline{\frac{1-s+\af}{2}}\biggr)-\psi\biggl(\frac{1-s+\af}{2}\biggr)+\psi\biggl(\overline{\frac{s+\af}{2}}\biggr)-\psi\biggl(\frac{s+\af}{2}\biggr)\biggr)\\ \notag
        &=\abs{X(s,\chi)}^{2}\frac{i}{2}\biggl(2i\biggl(\arg\biggl(\overline{\frac{1-s+\af}{2}}\biggr)-\arg\biggl(\frac{s+\af}{2}\biggr)\biggr)+\bigo{\gamma^{-2}}\biggr)\\
        &\ll\gamma^{1-2\sigma}\biggl(\bigo{\gamma^{-1}}+\bigo{\gamma^{-2}}\biggr)\ll\gamma^{-2\sigma},
    \end{align}
    by a standard estimate on $\psi$~\cite[p.~902,~8.361(3)]{gradshteyn2007} and the Taylor expansion of $\arccot$.
    Let
    \[ S(T) = \widesum{0<\gamma\leq T}\abs{X(\sigma+i\gamma,\chi_{1})}^{2}\abs[\bigg]{\sum_{n\leq R}c_{n}n^{-\sigma-i\gamma}}^{2}\abs[\bigg]{\rwidesum{n\leq\qgoverl}\chi_{1}(n)n^{\sigma-1+i\gamma}}^{2}.\]
    We use summation by parts, \eqref{prop1:diffestimate}, \eqref{prop1:secest1}, and \eqref{prop1:secest2} to see that
    \begin{multline*}
        S(T) =\abs{X(\sigma+iT,\chi_{1})}^{2}\lwidesum{0<\gamma\leq T}\abs[\bigg]{\rwidesum{n\leq\qgoverl}\chi_{1}(n)n^{\sigma-1+i\gamma}}^{2}\\
        -\int_{1}^{T}\lwidesum{0<\gamma\leq t}\abs[\bigg]{\rwidesum{n\leq\qgoverl}\chi_{1}(n)n^{\sigma-1+i\gamma}}^{2}\frac{d}{dt}\abs{X(\sigma+it,\chi_{1})}^{2}\d{t},
    \end{multline*}
    which simplifies to
    \[ S(T)\ll T^{1-2\sigma}T^{\sigma-1/2}N(T)+\int_{1}^{T}t^{\sigma-1/2}N(t)t^{-2\sigma}\d{t}.\]
    The first term is clearly $\lilo{N(T)}$. For the integral we use the fact that $N(t)=\bigo{t\log t}$ to estimate it as
    \begin{align*}
        \int_{1}^{T}t^{1/2-\sigma}\log t\d{t} &\ll T^{3/2-\sigma+\epsilon}.
    \end{align*}
    Hence we have that $S(T)=\lilo{N(T)}$, and similarly
    \begin{equation*}
        \widesum{0<\gamma\leq T}\abs*{X(\sigma+i\gamma,\overline{\chi}_{2})}^{2}\abs[\Bigg]{\rwidesum{n\leq\frac{1}{R}\lgoverq}\overline{\chi}_{2}(n)n^{\sigma-1-i\gamma}}^{2}\ll T^{3/2-\sigma+\epsilon}=\lilo{N(T)}.
    \end{equation*}
    Finally we use the Cauchy--Schwarz inequality, \eqref{prop1:firstest1}, \eqref{prop1:firstest2}, and the above two equations to estimate all other terms in \eqref{prop1:expanded} as $\lilo{N(T)}$.\qed

    \subsection{Proof of Proposition~\ref{prop2}}
    Since $B(s,P)$ is a finite Dirichlet polynomial it is bounded independently of $T$. Thus, to estimate $\sum_{\gamma\leq T}\abs{A(\gamma)}^{2}$, it suffices to estimate
    \begin{equation}\label{prop2:maineq}
        \widesum{0<\gamma\leq T}\abs*{L(s,\chi_{1})}^{2}\abs*{L(s,\chi_{2})}^{2}=\bigo{N(T)}.
    \end{equation}
    The approximate functional equation for $L(s,\chi_{1})$, as in the proof of Proposition~\ref{prop1}, gives
    \begin{align*}
        L(s,\chi_{1}) &=\widesum{n\leq\qlt}\chi_{1}(n)n^{-s}+ X(s,\chi_{1})\widesum{n\leq\qtoverl}\overline{\chi}_{1}(n)n^{s-1}+\bigo{t^{-\sigma/2}\log t+t^{-1/4}}\\
        &= W_{1}+ X(s,\chi_{1})W_{2}+\bigo{t^{-\sigma/2}\log t}+\bigo{t^{-1/4}}.\\
        \intertext{Similarly,}
        L(s,\chi_{2})&=Y_{1}+ X(s,\chi_{2})Y_{2}+\bigo{t^{-\sigma/2}\log t}+\bigo{t^{-1/4}},
    \end{align*}
    where \[Y_{1}=\widesum{n\leq\sqrt{\frac{qlt}{2\pi}}}\chi_{2}(n)n^{-s},\quad Y_{2}=\widesum{n\leq\sqrt{\frac{lt}{2\pi q}}}\overline{\chi}_{2}(n)n^{s-1}.\]
    We have
    \begin{equation}\label{prop2:mainsum}
        \widesum{0<\gamma\leq T}Y_{1}\overline{Y}_{1}W_{1}\overline{W}_{1}=\lwidesum{0<\gamma\leq T}\rwidesum{m,n,\mu,\nu\leq\qlg}\frac{\chi_{1}(m)\chi_{2}(n)\overline{\chi}_{1}(\mu)\overline{\chi}_{2}(\nu)}{(mn\mu\nu)^{\sigma}}\left(\frac{\mu\nu}{mn}\right)^{i\gamma}.
    \end{equation}
    Again, we consider the diagonal terms separately from the rest of the sum. The number of solutions to $mn=\mu\nu=r$ is at most the square of the number of divisors of $r$, $d(r)^{2}$.
    Thus
    \begin{equation}\label{prop2:mainterm}
        \lwidesum{0<\gamma\leq T}\rwidesum{mn=\mu\nu}[(q\gamma/2\pi)^{1/2}]\frac{\chi_{1}(m)\chi_{2}(n)\overline{\chi}_{1}(\mu)\overline{\chi}_{2}(\nu)}{(mn)^{2\sigma}}\ll\lwidesum{0<\gamma\leq T}\sum_{r=1}^{\infty}\frac{d(r)^{2}}{r^{2\sigma}}\ll N(T),
    \end{equation}
    %
    %
    since the inner series converges. For the off-diagonal terms set $\mu\nu=r$ and $mn=s$. We can treat the cases $s<r$ and $s>r$ separately. In the following analysis we assume $m$,~$n$,~$\mu$,~$\nu\leq (q\ell T/2\pi)^{1/2}$. Consider first the terms with $s<r$ in \eqref{prop2:mainsum}. We have that
    \begin{equation}
        Z_{2}=\lwidesum{r\leq\qlTs}\sum_{s<r}\rwidesum{m\mid s,\,\mu\mid r}\frac{\chi_{1}(m)\chi_{2}(s/m)\overline{\chi}_{1}(\mu)\overline{\chi}_{2}(r/\mu)}{r^{\sigma}s^{\sigma}}\lwidesum{K\leq\gamma\leq T}\left(\frac{r}{s}\right)^{i\gamma},
    \end{equation}
    where $K=\min(T,(2\pi/ql)\max(m^{2},s^{2}/m^{2},\mu^{2},r^{2}/\mu^{2}))$. Applying Gonek-Landau Formula~\eqref{thm:gonek} to $Z_{2}$ gives
    %
    %
    \begin{align*}
        Z_{2} &= \sum_{r\ll T}\sum_{s<r}\rwidesum{m\mid s,\,\mu\mid r}\frac{\chi_{1}(m)\chi_{2}(s/m)\overline{\chi}_{1}(\mu)\overline{\chi}_{2}(r/\mu)}{r^{\sigma+1/2}s^{\sigma-1/2}}\left(\sum_{0<\gamma\leq T}\left(\frac{r}{s}\right)^{\rho}-\lwidesum{0<\gamma<K}\left(\frac{r}{s}\right)^{\rho}\right)\\
        &=Z_{21,2}+Z_{23}+Z_{24}+Z_{25},
    \end{align*}
    with
    \begin{align*}
        Z_{21,2} &=\sum_{r\ll T}\sum_{s<r}\rwidesum{m\mid s,\,\mu\mid r}\frac{\chi_{1}(m)\chi_{2}(s/m)\overline{\chi}_{1}(\mu)\overline{\chi}_{2}(r/\mu)}{r^{\sigma+1/2}s^{\sigma-1/2}}\frac{K-T}{2\pi}\Lambda\left(\frac{r}{s}\right),\\
        Z_{23} &\ll \sum_{r\ll T}\sum_{s<r}\sum_{m\mid s,\,\mu\mid r}\frac{1}{r^{\sigma+1/2}s^{\sigma-1/2}}\frac{r}{s}\log\frac{2Tr}{s}\,\log\log\frac{3r}{s},\\
        Z_{24} &\ll \sum_{r\leq cT}\sum_{s<r}\rwidesum{m\mid s,\mu\mid r}\frac{\chi_{1}(m)\chi_{2}(s/m)\overline{\chi}_{1}(\mu)\overline{\chi}_{2}(r/\mu)}{r^{\sigma+1/2}s^{\sigma-1/2}}\log\frac{r}{s}\,\min\left(T,\frac{r/s}{\langle r/s\rangle}\right),\\
        \shortintertext{and}
        Z_{25} &\ll \sum_{r\ll T}\sum_{s<r}\sum_{m\mid s,\,\mu\mid r}\frac{1}{r^{\sigma+1/2}s^{\sigma-1/2}}\log 2T\,\min\left(T,\frac{1}{\log(r/s)}\right).
    \end{align*}
    For $Z_{21,2}$ we set $r=sk$. Since $d(x)\ll x^{\epsilon}$ and $K\leq T$, we get
    \begin{align*}
        Z_{21,2}&\ll T\sum_{k\ll T}\sum_{s\ll T/k}\frac{\Lambda(k)k^{\epsilon}s^{\epsilon}}{k^{\sigma+1/2}s^{2\sigma}}=\bigo{T}.
    \end{align*}
    We also have
    \begin{align*}
        Z_{23} &\ll \log T\,\log\log T\sum_{r\ll T}\frac{r^{\epsilon}}{r^{\sigma-1/2}}\sum_{s<r}\frac{s^{\epsilon}}{s^{\sigma+1/2}}\\
        &\ll \log T\,\log\log T\sum_{r\ll T}\frac{r^{\epsilon}}{r^{\sigma-1/2}}=\lilo{N(T)}.
    \end{align*}
    We can rewrite $Z_{24}$ as
    \begin{equation}\label{eq:ztwofour}
        \sum_{r\leq cT}\frac{\overline{(\chi_{1}\ast\chi_{2})}(r)}{r^{\sigma+1/2}}\sum_{s<r}\frac{(\chi_{1}\ast\chi_{2})(s)}{s^{\sigma-1/2}}\,\log\frac{r}{s}\,\min\left(T,\frac{r/s}{\langle r/s\rangle}\right),
    \end{equation}
    where $\ast$ denotes the Dirichlet convolution. Let $r=us+t$, where $-s/2<t\leq s/2$, and separate the terms where $u$ is not a prime power to $Z_{24,1}$, and denote the remaining terms by $Z_{24,2}$.
    We use \eqref{eq:disttoprime} to see that
    \begin{equation}
        Z_{24,1}\ll\sum_{s\leq cT}\sum_{u\ll cT/s+1}\sum_{\abs{t}<s/2}\frac{s^{\epsilon}(us+t)^{\epsilon}}{s^{\sigma+1/2}(us+t)^{\sigma-1/2}}\,\log \left(u+\frac{t}{s}\right).
    \end{equation}
    Rewriting yields
    \[Z_{24,1}\ll\log T\sum_{s\leq cT}\frac{s^{2\epsilon}}{s^{2\sigma}}\lwidesum{u\ll cT/s+1}\sum_{\abs{t}<s/2}\left(u+\frac{t}{s}\right)^{1/2-\sigma+\epsilon}.\]
    The terms in $u$ can be bound from above by $(u-1)^{1/2-\sigma+\epsilon}$. Thus
    \begin{align*}
        Z_{24,1} &\ll \log T\sum_{s\leq cT}s^{1+2\epsilon-2\sigma}\left(\frac{cT}{s}\right)^{3/2-\sigma+\epsilon}\\
        &\ll T^{3/2-\sigma+\epsilon}T^{1/2-\sigma+\epsilon}\log T=\bigo{N(T)}.
    \end{align*}
    For $Z_{24,2}$ let  $'$ in summation denote that the sum extends only over prime powers. We need to estimate
    \begin{equation*}
        \sum_{s\leq cT}\sideset{}{'}\sum_{u\leq\lfloor\frac{cT}{s}\rfloor+1}\rwidesum{0\neq\abs{t}<s/2}\frac{\overline{(\chi_{1}\ast\chi_{2})}(us+t)(\chi_{1}\ast\chi_{2})(s)}{(us+t)^{\sigma+1/2}s^{\sigma-1/2}}\log\left(u+\frac{t}{s}\right)\,\min\left(T,\frac{us+t}{\abs{t}}\right)
    \end{equation*}
    as $\bigo{N(T)}$. This can be rewritten as
    \begin{equation*}
        \sum_{s\leq cT}\frac{(\chi_{1}\ast\chi_{2})(s)}{s^{2\sigma}}\sideset{}{'}\sum_{u\leq\lfloor\frac{cT}{s}\rfloor+1}\rwidesum{0\neq\abs{t}<s/2}\frac{\overline{(\chi_{1}\ast\chi_{2})}(us+t)}{(u+\frac{t}{s})^{\sigma+1/2}}\log\left(u+\frac{t}{s}\right)\,\min\left(T,\frac{us+t}{\abs{t}}\right).
    \end{equation*}
    By taking absolute values and using the Triangle inequality we find that
    \begin{align*}
        Z_{24,2}&\ll \log^{2}T\,\sum_{s\ll T}s^{2\epsilon-2\sigma+1}\widesum{u\ll T/s}u^{1/2-\sigma+\epsilon}\\
        &\ll T^{3/2-\sigma+\epsilon}\log^{2}T\sum_{s\ll T}s^{\epsilon-\sigma-1/2}=\bigo{N(T)},
    \end{align*}
    as required.
    It remains to estimate $Z_{25}$. We use the same method as in Proposition~\ref{prop1}. Let $s=r-k$, and $1\leq k<r$ to get
    \begin{align*}
        Z_{25} &\ll \log T\sum_{r\ll T}\sum_{k<r}\frac{1}{r^{\sigma+1/2-\epsilon}(r-k)^{\sigma-1/2-\epsilon}}\frac{r}{k}\\
        &\ll \log T\sum_{r\ll T}\frac{1}{r^{\sigma-1/2-\epsilon}}\sum_{k< r}\frac{1}{k}=\lilo{N(T)}.
    \end{align*}
    Finally, if $s>r$ we can consider the complex conjugate of~\eqref{prop2:mainsum} to obtain the same estimate.
    The rest of the proof proceeds in the same way as in Proposition~\ref{prop1}. We obtain trivially the estimates
    \begin{align}
        \widesum{0<\gamma\leq T}\abs{W_{1}}^{4} &\ll N(T),\\
        \widesum{0<\gamma\leq T}\abs{Y_{1}}^{4} &\ll N(T).
    \end{align}
    Also, by modifying the argument slightly we find that
    \begin{align}
        \widesum{0<\gamma\leq T}\abs{W_{2}}^{4} &\ll T^{2\sigma-1+\epsilon}N(T),\\
        \widesum{0<\gamma\leq T}\abs{Y_{2}}^{4} &\ll T^{2\sigma-1+\epsilon}N(T).
    \end{align}
    We also need to estimate the derivative of $\abs{X(s,\chi)}^{4}$. By estimate~\eqref{prop1:diffestimate} from Proposition~\ref{prop1} we get
    \begin{align*}
        \frac{d}{d\gamma}\abs{X(s,\chi)}^{4}&\ll\gamma^{1-2\sigma}\gamma^{-2\sigma}\ll\gamma^{1-4\sigma}.
    \end{align*}
    The rest of the proof now follows from estimating
    \[\widesum{0<\gamma\leq T}\abs{X(s,\chi_{1})}^{4}\abs{W_{2}}^{4}=\bigo{T^{1-2\sigma+\epsilon}N(T)},\]
    and similarly for $Y_{2}$, and applying the Cauchy--Schwarz to the remaining terms in the expansion of the product in~\eqref{prop2:maineq}.\qed

    \subsection{Proof of Proposition~\ref{prop3}}
    Let
    \[D=\sum_{n=1}^{\infty}\frac{d_{n}\overline{\chi}_{2}(n)}{n^{2\sigma}},\quad E=\sum_{n=1}^{\infty}\frac{e_{n}\overline{\chi}_{1}(n)}{n^{2\sigma}}.\]
    By Proposition~\ref{prop2} it is sufficient to show that $D-E\neq 0$.
    First, we need to compute the $d_{n}$ and $e_{n}$'s explicitly. Let us denote the set of primes smaller than $P$ by $\primesP=\{p_{1},p_{2},\dotsc,p_{h}, P\}$.
    Suppose $P$ is large enough so that $q$, $\ell\in\primesP$. The coefficients $d_{n}$ are defined by the Euler product
    \begin{equation*}
        \prod_{p\leq P}(1-\chi_{2}(p)p^{-s})\times\prod_{p>P}\sum_{n=0}^{\infty}\frac{\chi_{1}(p^{n})}{p^{ns}}.
    \end{equation*}
    If $p^{2}\mid n$, $p\in\primesP$, then $n$ disappears from the expansion, i.e.~$d_{n}=0$. If $n$ has no prime factors from the set $\primesP$, then we just get the usual coefficient from $L(s,\chi_{1})$. On the other hand, if some prime $p\in\primesP$ divides $n$ exactly once then it contributes $-\chi_{2}(p)$. Hence
    \begin{equation*}
        d_{n} =\begin{cases}\chi_{1}(n),&\text{if $p\nmid n$ for all $p\in\primesP$,}\\ (-1)^{k}\chi_{1}\bigl(\frac{n}{p_{i_{1}}\dotsm p_{i_{k}}}\bigr)\chi_{2}(p_{i_{1}}\dotsm p_{i_{k}}),&\text{if $p_{i_{j}}\parallel n$ for $p_{i_{j}}\in\primesP$ for all $j$,}\\ 0&\text{otherwise.}\end{cases}
    \end{equation*}
    %
    %
    Similarly for $e_{n}$. Hence for $p>P$ the Euler factors of $D$ are of the form
    \begin{equation*}
        (1-\chi_{1}(p)\overline{\chi}_{2}(p)p^{-2\sigma})^{-1},
    \end{equation*}
    while for $E$ one obtains the complex conjugate. On the other hand, for $p\leq P$ we have
    \begin{equation*}
        1+d_{p}\overline{\chi}_{2}(p)p^{-2\sigma}+d_{p^{2}}\overline{\chi}_{2}(p^{2})p^{-4\sigma}+\dotsb = 1 - \chi_{2}\overline{\chi}_{2}(p)p^{-2\sigma}=1-p^{-2\sigma},
    \end{equation*}
    unless $p=\ell$, and similarly for the second series. Now, suppose that $D=E$, then
    \begin{equation*}
        \prod_{\substack{p\leq P\\p\neq\ell}}(1-p^{-2\sigma})\prod_{p>P}(1-(\chi_{1}\overline{\chi}_{2})(p)p^{-2\sigma})^{-1} =
        \prod_{\substack{p\leq P\\p\neq q}}  (1-p^{-2\sigma})\prod_{p>P}(1-(\overline{\chi}_{1}\chi_{2})(p)p^{-2\sigma})^{-1}.
    \end{equation*}
    We cancel out the common terms in the product over $p<P$, which yields
    \[(1-q^{-2\sigma})z=(1-\ell^{-2\sigma})\bar{z},\]
    where
    \[z=\prod_{p>P}(1-(\chi_{1}\overline{\chi}_{2})(p)p^{-2\sigma})^{-1}.\]
    Hence,
    \[\frac{1-q^{-2\sigma}}{1-\ell^{-2\sigma}}=\frac{\bar{z}}{z}.\]
    Taking absolute values yields
    \[\frac{1-q^{-2\sigma}}{1-\ell^{-2\sigma}}=1,\]
    which is a contradiction.\qed

    \section{Proof of Theorem~\ref{thmtwo}} %

    We now sample the values of $L(s,\chi)$ at precisely the non-trivial zeros of $\zeta$.
    In this case we do not assume RH. Off the critical line we used the method of Gonek--Landau to prove linear independence.
    On the critical line, however, this becomes very difficult.
    This is mainly because of the corresponding $Z_{24}$ term in the first proposition. We get
    \[Z_{24}=\sum_{n\leq X}\sum_{m<n}\frac{\chi(m/n)}{n}\log\left(\frac{n}{m}\right)\min\left(T,\frac{n/m}{\langle n/m\rangle}\right),\]
    where $X=qT/2\pi\sqrt{\log T}$. This should be $\lilo{N(T)\log T}$, which seems to be very difficult to prove.
    In the proof of Conrey et al.~\cite{conrey1986} they make a reduction to the discrete mean values of one $L$-function at a time.
    We have been unable to find such a reduction in our case.
    Garunkštis et~al.~\cite{garunkstis2010} presented a more suitable method through contour integration and a modified Gonek Lemma (see Lemma~\ref{modgonek}).

    Denote the characters in Theorem~\ref{thmtwo} by $\chi_{1}$ and $\chi_{2}$ with distinct prime moduli $q$ and $\ell$.
    For any Dirichlet character $\mu$ modulo $n$, we denote the principal character modulo $n$ by $\mu_{0}$.
    Moreover, put $B(s,p)=p^{s}$ for some prime $p$ to be determined later.
    Then, \[A(\gamma):=B(\rho,p)\left(L(\rho,\chi_{1})-L(\rho,\chi_{2})\right)\] is non-zero
    precisely when the two $L$-functions assume distinct values.
    \begin{prop}\label{prop21}
        Let $\ccontour$ be the rectangular contour with vertices at $a+i$, $a+iT$, $1-a+iT$, and $1-a+i$ with
        positive orientation, where $a=1+(\log T)^{-1}$. Then we have
        \begin{equation}\label{eq:prop21_1}
            \frac{1}{2\pi i}\int_{\ccontour}\frac{\zeta'}{\zeta}(s)B(s,p)L(s,\chi_{1})\d{s} \sim \frac{\overline{C}_{\chi_{1}}T}{2\pi}\log \frac{T}{2\pi},
        \end{equation}
        where
        \[ C_{\chi_{1}} = \frac{G(1,\overline{\chi}_{1})G(-p,\chi_{1})}{q},\]
        and similarly for $\chi_{2}$.
    \end{prop}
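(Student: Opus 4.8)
The plan is to evaluate the contour integral $\frac{1}{2\pi i}\int_{\ccontour}\frac{\zeta'}{\zeta}(s)B(s,p)L(s,\chi_1)\,ds$ by two complementary routes and compare. On the one hand, the integrand is meromorphic inside $\ccontour$ except at the non-trivial zeros of $\zeta$ (the trivial zeros and the pole at $s=1$ lie outside the strip $1-a<\re s<a$, $1\le\im s\le T$, since $a>1$), so the residue theorem gives that the integral equals $\sum_{0<\gamma\le T}\operatorname{ord}_\rho(\zeta)\,B(\rho,p)L(\rho,\chi_1)$, i.e.\ the discrete sum we actually want (counted with multiplicity; for the asymptotic statement of Theorem~\ref{thmtwo} this is harmless). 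On the other hand, we estimate the integral side-by-side along the four edges of $\ccontour$.

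First I would handle the right edge $\re s=a$. Here $L(s,\chi_1)$ and $\frac{\zeta'}{\zeta}(s)$ are given by absolutely convergent Dirichlet series, $\frac{\zeta'}{\zeta}(s)=-\sum_{n\ge2}\Lambda(n)n^{-s}$, and $B(s,p)=p^s$, so after multiplying out we face a sum of terms of the shape $\int_{a+i}^{a+iT}\bigl(\tfrac{m}{2\pi}\bigr)^s\Gamma(s)\exp(\pm\tfrac{\pi i s}{2})\sum a(n)n^{-s}\,ds$ only after one first applies the functional equation to move part of the contribution; but the cleaner route, following~\cite{garunkstis2010}, is to apply Lavrik's approximate functional equation (Theorem~\ref{thm:approx}) or the functional equation of $L(s,\chi_1)$ to rewrite $L(s,\chi_1)$ on this edge, and then invoke the Modified Gonek Lemma (Lemma~\ref{modgonek}) directly to the resulting Mellin–Barnes type integrals. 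The $\delta=+1$ branch contributes only $O(m^a)$, which is negligible after summing against $\Lambda(n)\chi_1(\cdot)$ and the convergence factors, while the $\delta=-1$ branch produces the main term: a sum $\sum_{n\le Tm/2\pi}a(n)\exp(-2\pi i n/m)$ which, once one tracks the Gauss sums $G(1,\overline{\chi}_1)$ coming from the sign of the functional equation of $L(s,\chi_1)$ and $G(-p,\chi_1)$ coming from $B(s,p)=p^s$ shifting the exponential, collapses to $\frac{\overline{C}_{\chi_1}T}{2\pi}\log\frac{T}{2\pi}$ with $C_{\chi_1}=G(1,\overline{\chi}_1)G(-p,\chi_1)/q$; the $\log$ is produced by the $\sum_{n\le X}1/n$–type behaviour of the prime-power weights $\Lambda$.

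Next I would show the remaining three edges are lower order. On the left edge $\re s=1-a<0$ one uses the functional equation for $\zeta$ and for $L(s,\chi_1)$ to reflect everything into the region of absolute convergence; the gamma-factor ratios from both functional equations, estimated by Stirling as in~\eqref{eq:phiestimate}, combine with $B(1-a+it,p)=p^{1-a}p^{it}$ to give an integrand of size a fixed power of $t$, and the $t$-integral is $O(T^{A})$ for some $A$, which I expect to be $o(T\log T)$ after the same Modified-Gonek cancellation is exploited — but in fact the cleanest argument is that the left edge, once reflected, is structurally identical to the right edge but with the roles that generate the main term absent, so it contributes only $O(T)$ or $o(T\log T)$. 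On the two horizontal edges $\im s=1$ and $\im s=T$ one uses the standard bound $\frac{\zeta'}{\zeta}(\sigma+iT)\ll\log^2 T$ valid for $T$ away from ordinates of zeros (choose $T$ in such a set, as is standard), together with convexity bounds for $L(\sigma+iT,\chi_1)$ and $|p^{\sigma+iT}|\le p^{a}$, to get a contribution $O(\log^3 T)$ from the top edge; the bottom edge is $O(1)$.

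The main obstacle will be the careful bookkeeping of constants and of the error terms in the application of the Modified Gonek Lemma on the right edge: one must verify that the coefficient sequence $a(n)$ arising after expanding $\frac{\zeta'}{\zeta}(s)L(s,\chi_1)$ (a Dirichlet convolution of $-\Lambda$ with $\chi_1$) indeed satisfies $a(n)=O(n^\epsilon)$, that the shift by $B(s,p)=p^s$ exactly reindexes the exponential $\exp(-2\pi i n/m)$ into the claimed Gauss sum $G(-p,\chi_1)$, and that the error $O(m^a T^{1/2+\epsilon})$ summed over the relevant $m$ stays $o(T\log T)$. The identification of the limiting constant as $\overline{C}_{\chi_1}=\overline{G(1,\overline{\chi}_1)G(-p,\chi_1)}/q$ — in particular getting the complex conjugate and the factor $1/q$ right from the sign $\bige(\chi_1)=q^{-1/2}i^{\af}G(1,\chi_1)$ of the functional equation — is where I would be most careful. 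The argument for $\chi_2$ is verbatim the same with $q,\chi_1$ replaced by $\ell,\chi_2$.
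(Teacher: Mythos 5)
There is a genuine gap: you locate the main term on the wrong side of the contour, and the mechanism you propose for producing the factor $\log\frac{T}{2\pi}$ does not work. On the right edge $\re s=a>1$ everything converges absolutely, and a direct term-by-term integration of $-\sum_{n,m}\Lambda(n)\chi_{1}(m)p^{s}(nm)^{-s}$ shows this edge contributes only $O(T)$: each off-diagonal frequency $nm\neq p$ gives a bounded oscillatory integral, and the only resonance is $nm=p$, contributing a single term of size $T$. No $T\log T$ term can arise there, and your proposed source of the logarithm --- ``$\sum_{n\le X}1/n$-type behaviour of the prime-power weights $\Lambda$'' --- fails because the weights are twisted by the non-principal character: partial sums of $\Lambda(m)\overline{\chi}_{1}(m)/m$ are \emph{bounded} (this is exactly how the paper shows the corresponding reflected term $\intf_{3}$ is only $O(T)$, via the prime number theorem for $\overline{\chi}_{1}$ modulo the fixed prime $q$, the possible Siegel zero being harmless since $q$ is fixed). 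Note also that applying the functional equation of $L(s,\chi_{1})$ while remaining on $\re s=a$ produces $L(1-s,\overline{\chi}_{1})$ outside its region of absolute convergence, so the Modified Gonek Lemma cannot be invoked on the right edge in the form you describe.

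The main term in fact comes from the left edge, which you dismiss as ``structurally identical to the right edge with the roles that generate the main term absent.'' The paper substitutes $s\mapsto 1-\bar{s}$, conjugates, and applies the functional equations of both $\zeta$ and $L(\cdot,\chi_{1})$, turning $\frac{\zeta'}{\zeta}(1-\bar{s})$ into $\frac{\zeta'}{\zeta}(s)+\frac{\gamma'}{\gamma}(s)$ and introducing the factor $\Delta(s,\chi_{1})$; the asymptotic comes from the $\frac{\gamma'}{\gamma}$ piece, i.e.\ from the archimedean factor (equivalently, the density of zeros), not from $\Lambda$. Concretely, the $\delta=-1$ branch of Lemma~\ref{modgonek} converts the inner integral into $\sum_{n\le \tau q/(2\pi p)}\chi_{1}(n)\exp(-2\pi i np/q)+O(\tau^{1/2+\epsilon})$, whose decomposition into residue classes mod $q$ gives $\frac{\tau}{2\pi p}G(-p,\chi_{1})+O(1)$, and integration by parts against $\frac{\gamma'}{\gamma}(a+i\tau)=\log\frac{\tau}{2\pi}+O(\tau^{-1})$ supplies $\log\frac{T}{2\pi}$; the complex conjugation of the left edge explains why $\overline{C}_{\chi_{1}}$ (not $C_{\chi_{1}}$) appears. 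Your handling of the horizontal edges and the residue-theorem bookkeeping is fine (the top edge is actually $O(T^{1/2}\log^{3}T)$ rather than $O(\log^{3}T)$, which is harmless), and your identification of the origin of the two Gauss sums is right in spirit, but as written your plan yields only $O(T)$ on the edge you designate as principal and misses the true main term.
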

    Then, by the residue theorem, we get
    \[\widesum{0<\gamma\leq T}A(\gamma)=\frac{1}{2\pi i}\int_{\ccontour}\frac{\zeta'}{\zeta}(s)B(s,p)\left(L(s,\chi_{1})-L(s,\chi_{2})\right)\d{s}.\]
    \begin{prop}\label{prop22}
        With the same contour as in Proposition~\ref{prop21} we have for $j,j'\in\{1,2\}$ that
        \[ \frac{1}{2\pi i}\int_{\ccontour}\frac{\zeta'}{\zeta}(s)L(s,\chi_{j})L(1-s,\overline{\chi}_{j'})\d{s}\ll T\log^{2}T. \]
    \end{prop}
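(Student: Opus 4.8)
The plan is to split the rectangle $\ccontour$ into its four sides and estimate each. The two horizontal sides are handled by crude bounds; on the two vertical sides I use the functional equations of $\zeta$ and of the Dirichlet $L$-functions to reduce the integrals to the shape of the Modified Gonek Lemma (Lemma~\ref{modgonek}), out of which the bound $T\log^{2}T$ falls. Throughout I assume, adjusting $T$ by at most $1$ at a cost of $\bigo{T^{1/2+\epsilon}}$, that $\abs{T-\gamma}\gg1/\log T$ for every ordinate $\gamma$ of a zero of $\zeta$; then $\zeta'/\zeta(s)\ll\log^{2}T$ on $\im s=T$.

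For the horizontal sides: on the bottom side $\{\sigma+i:1-a\le\sigma\le a\}$ the integrand is $\bigo{1}$ and the side has length $\bigo{1}$, so this contributes $\bigo{1}$. On the top side $\im s=T$, convexity gives $L(s,\chi_{j})L(1-s,\overline{\chi}_{j'})\ll T^{1/2+\epsilon}$ uniformly for $1-a\le\re s\le a$, the convexity exponents of the two factors summing to $1/2$; together with $\zeta'/\zeta(s)\ll\log^{2}T$ this side is $\bigo{T^{1/2+\epsilon}}=\lilo{T\log^{2}T}$.

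On the right side $\re s=a$ we have $\re(1-s)<0$, so the asymmetric functional equation gives $L(1-s,\overline{\chi}_{j'})=(q_{j'}/2\pi)^{s}\Gamma(s)\bigl(\lambda_{1}e^{i\pi s/2}+\lambda_{2}e^{-i\pi s/2}\bigr)L(s,\chi_{j'})$, where $q_{j'}\in\{q,\ell\}$ is the modulus of $\chi_{j'}$ and $\lambda_{1},\lambda_{2}$ have modulus $\bigo{1}$. On $\re s=a>1$ one has $\frac{\zeta'}{\zeta}(s)L(s,\chi_{j})L(s,\chi_{j'})=\sum_{n\ge1}a(n)n^{-s}$ with $a(n)=-(\Lambda\ast\chi_{j}\ast\chi_{j'})(n)=\bigo{n^{\epsilon}}$, so the right-side integral splits into two integrals of exactly the form in Lemma~\ref{modgonek} with $m=q_{j'}$. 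The $\delta=+1$ piece is $\bigo{q_{j'}^{a}}=\bigo{1}$; the $\delta=-1$ piece equals $\sum_{n\le q_{j'}T/2\pi}a(n)e^{-2\pi in/q_{j'}}+\bigo{q_{j'}^{a}T^{1/2+\epsilon}}$, and $\abs*{\sum_{n\le x}a(n)e^{-2\pi in/q_{j'}}}\le\sum_{n\le x}(\Lambda\ast d)(n)\ll x\log^{2}x$ with $x=q_{j'}T/2\pi$, hence $\ll T\log^{2}T$. This is exactly where a logarithm is lost compared with $T\log T$: the extra factor is forced by the von Mangoldt coefficients coming from $\zeta'/\zeta$, and I expect this to be the source of the failure to reach a positive proportion in Theorem~\ref{thmtwo}.

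The left side $\re s=1-a$ reduces to the previous one. Substituting $s\mapsto1-s$ and using $\overline{L(w,\chi)}=L(\overline{w},\overline{\chi})$ and $\overline{\zeta'/\zeta(w)}=\zeta'/\zeta(\overline{w})$, the left side has the same modulus as $\frac{1}{2\pi i}\int_{a+i}^{a+iT}\frac{\zeta'}{\zeta}(1-v)L(1-v,\overline{\chi}_{j})L(v,\chi_{j'})\,dv$. Now $\re v=a>1$, so $L(v,\chi_{j'})$ is a Dirichlet series, $L(1-v,\overline{\chi}_{j})$ is handled by the functional equation as above, and $\zeta(1-v)=2(2\pi)^{-v}\cos(\pi v/2)\Gamma(v)\zeta(v)$ yields $\frac{\zeta'}{\zeta}(1-v)=-\frac{\zeta'}{\zeta}(v)+\frac{\pi}{2}\tan(\pi v/2)-\psi(v)+\log 2\pi$. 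The term $-\zeta'/\zeta(v)=\sum\Lambda(n)n^{-v}$ again feeds Lemma~\ref{modgonek} and contributes $\ll T\log^{2}T$. For the remaining part, on $\re v=a$ with $\im v>0$ we have $\frac{\pi}{2}\tan(\pi v/2)=\tfrac{i\pi}{2}+\bigo{e^{-\pi\im v}}$ and $\psi(v)=\log v+\bigo{\abs{v}^{-1}}$, which leaves an integral carrying a factor $(\log v)\Gamma(v)$. This $\log v$ is the one genuine obstacle, since it is not a Dirichlet-series coefficient and Lemma~\ref{modgonek} does not apply to it directly; I would remove it by writing $(\log v)\Gamma(v)=\Gamma'(v)+\bigo{\Gamma(v)/\abs{v}}$ and integrating by parts in $\Gamma$. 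The boundary terms are $\bigo{T^{1/2+\epsilon}}$, and differentiating the remaining smooth factors $(q_{j}/2\pi)^{v}e^{\pm i\pi v/2}L(v,\chi_{j})L(v,\chi_{j'})$ produces only the Dirichlet series attached to $\chi_{j}\ast\chi_{j'}$ and to the convolution of $\log$ with $\chi_{j}$ and $\chi_{j'}$, whose partial sums are again $\ll x\log^{2}x$; a last application of Lemma~\ref{modgonek} closes the estimate. Collecting the four sides, uniformly in $j,j'\in\{1,2\}$, gives the claim.
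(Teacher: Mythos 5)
Your proposal is correct and follows the same overall strategy as the paper: split $\ccontour$ into its four sides, bound the horizontal sides crudely, and on the vertical sides use the functional equations of $\zeta$ and of the $L$-functions to reduce to integrals of the shape of Lemma~\ref{modgonek}. You deviate in two technical sub-steps, both legitimately. First, for the von Mangoldt sums produced on the $\re s=a$ edge (and for the $\zeta'/\zeta$-part of the left edge) you simply use the trivial bound $\sum_{n\leq x}(\Lambda\ast d)(n)\ll x\log^{2}x$; the paper instead separates the sum into residue classes, expands over characters, applies Perron's formula and a Vinogradov-type zero-free region, obtaining the sharper $\bigo{T\log T}$ for that edge. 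Your shortcut suffices for the stated bound $T\log^{2}T$ and is simpler. Second, for the logarithmic factor coming from the $\zeta$-functional equation on the left edge, you write $(\log v)\Gamma(v)=\Gamma'(v)+\bigo{\Gamma(v)/\abs{v}}$ and integrate by parts in $v$, whereas the paper keeps the factor $\gamma'/\gamma(a+i\tau)=\log(\tau/2\pi)+\bigo{\tau^{-1}}$ and performs Riemann--Stieltjes summation by parts against the partial Gonek integrals (which are $\bigo{\tau\log\tau}$); both devices yield $\bigo{T\log^{2}T}$, and your boundary and error terms are indeed $\bigo{T^{1/2+\epsilon}}$. One remark in your write-up is off the mark, though it does not affect the proof: the extra logarithm is not forced by the $\Lambda$-coefficients from $\zeta'/\zeta$ --- the paper's Perron analysis shows that contribution is only $T\log T$ --- but rather by the $\log\tau$ factor from the gamma factor of $\zeta$ multiplying the divisor-type sum $\sum_{n\leq x}\chi(n)d(n)e^{-2\pi in/q}\ll x\log x$ on the left edge, which is where the paper's $T\log^{2}T$ genuinely arises. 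Finally, your preliminary adjustment of $T$ so that $\abs{T-\gamma}\gg1/\log T$ (to justify $\zeta'/\zeta\ll\log^{2}T$ on the top edge) is a point the paper passes over silently, and is handled correctly by you.
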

    This proposition gives us estimates for all the terms in $\sum\abs{A(\gamma)}^{2}$, since $B(s,p)$ can be bounded independently of $T$.
    Finally, we have to prove that the difference coming from Proposition~\ref{prop21} is non-zero.
    \begin{prop}\label{prop23}
        There is a prime $p$, different from $q$ and $\ell$, such that
        $C_{\chi_{1}}-C_{\chi_{2}}\neq 0$.
    \end{prop}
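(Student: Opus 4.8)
The plan is to prove Proposition~\ref{prop23} by an explicit computation of the Gauss sums involved. Recall that for a primitive character $\chi$ modulo a prime $q$ we have $G(1,\chi)=G(\chi)$ with $\abs{G(\chi)}=\sqrt{q}$, and more generally $G(k,\chi)=\overline{\chi}(k)G(\chi)$ whenever $(k,q)=1$, while $G(k,\chi)=0$ when $q\mid k$. Since the modulus $q$ is prime and we are choosing $p\neq q$, this last degenerate case never occurs for $C_{\chi_1}$, and likewise $p\neq\ell$ keeps $C_{\chi_2}$ in the good range. Applying these identities, $G(-p,\chi_1)=\overline{\chi}_1(-p)G(\chi_1)$, so
\[
C_{\chi_1}=\frac{G(\overline{\chi}_1)\,\overline{\chi}_1(-p)\,G(\chi_1)}{q}
=\overline{\chi}_1(-p)\,\frac{G(\chi_1)G(\overline{\chi}_1)}{q}.
\]
Now I would use the standard relation $G(\chi)G(\overline{\chi})=\chi(-1)q$ for primitive $\chi$ modulo a prime $q$ (this follows from $\overline{G(\chi)}=\chi(-1)G(\overline{\chi})$ together with $\abs{G(\chi)}^2=q$). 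Hence $C_{\chi_1}=\overline{\chi}_1(-p)\chi_1(-1)=\overline{\chi}_1(p)$, using $\overline{\chi}_1(-1)\chi_1(-1)=1$. Similarly $C_{\chi_2}=\overline{\chi}_2(p)$. (If one prefers to keep track of the root number $\bige(\chi)$, one can equivalently write $C_{\chi_1}=\overline{\bige(\chi_1)}\bige(\chi_1)^{-1}\cdot(\text{stuff})$, but the clean outcome is simply that $C_{\chi_1}$ and $C_{\chi_2}$ are roots of unity of modulus $1$, namely $\overline{\chi}_1(p)$ and $\overline{\chi}_2(p)$.)

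With these evaluations in hand, Proposition~\ref{prop23} reduces to the purely arithmetic statement: there exists a prime $p$, distinct from $q$ and $\ell$, with $\overline{\chi}_1(p)\neq\overline{\chi}_2(p)$, equivalently $\chi_1(p)\neq\chi_2(p)$. Suppose for contradiction that $\chi_1(p)=\chi_2(p)$ for every prime $p\nmid q\ell$. Since $\chi_1$ and $\chi_2$ are completely multiplicative and every integer coprime to $q\ell$ factors into such primes, this forces $\chi_1(n)=\chi_2(n)$ for all $n$ with $(n,q\ell)=1$. But $\chi_1$ has modulus $q$ and $\chi_2$ has modulus $\ell$ with $q\neq\ell$; I would now derive a contradiction, for instance by noting that $\chi_1$ is then a character that is well-defined modulo $\ell$ on the residues coprime to $q\ell$, which (since $q$ is prime and $q\nmid\ell$) extends to a character modulo $\ell$ agreeing with $\chi_1$ off $q$ — contradicting the primitivity of $\chi_1$ modulo $q$, because a primitive character modulo a prime $q$ cannot be periodic with any period not divisible by $q$. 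Concretely: pick $a\equiv 1 \pmod{\ell}$ with $\chi_1(a)\neq 1$ (possible since $\chi_1$ is non-principal modulo the prime $q$, so it is non-trivial on $(\integers/q\integers)^\times$, and by CRT we may choose such $a$ coprime to $q\ell$); then $\chi_2(a)=\chi_2(1)=1\neq\chi_1(a)$, the desired contradiction. This shows the set of primes $p$ with $\chi_1(p)\neq\chi_2(p)$ is non-empty, and we may pick one different from $q,\ell$ (only finitely many primes divide $q\ell$).

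The only genuinely delicate point is bookkeeping with the Gauss-sum normalisation and sign conventions — making sure the exponent of $i$ in $\bige(\chi)=q^{-1/2}i^{\af}G(1,\chi)$ and the argument $-p$ in $G(-p,\chi_1)$ are threaded through correctly so that the cross terms really do collapse to $\overline{\chi}_1(p)$ rather than to something like $\chi_1(-1)\overline{\chi}_1(p)$; but either way the modulus is $1$ and the value is a root of unity depending on $p$ only through $\overline{\chi}_j(p)$ up to a fixed sign, so the reduction to $\chi_1(p)\neq\chi_2(p)$ is robust. Everything else is the elementary character-theory argument above, which is immediate. Thus $C_{\chi_1}-C_{\chi_2}\neq 0$ for a suitable prime $p$, completing the proof.
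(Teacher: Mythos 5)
Your proof is correct, and its core coincides with the paper's: you use the identities $G(k,\chi)=\overline{\chi}(k)G(\chi)$ and $G(\chi)G(\overline{\chi})=\chi(-1)q$ (equivalently the paper's \eqref{eq:gauss1} and \eqref{eq:gauss2}) to evaluate $C_{\chi_j}=\overline{\chi}_j(p)$, reducing the claim to producing a prime $p\nmid q\ell$ with $\chi_1(p)\neq\chi_2(p)$; your bookkeeping here checks out, since $\overline{\chi}_1(-p)\chi_1(-1)=\overline{\chi}_1(p)$. The only genuine difference is the final existence step. The paper exhibits the prime directly: by the Chinese Remainder Theorem and Dirichlet's theorem on primes in arithmetic progressions it chooses a prime $p\equiv 1\bmod q$, $p\equiv a\bmod \ell$ with $\chi_2(a)\neq 1$, so that $\chi_1(p)=1\neq\chi_2(p)$. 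You instead argue by contradiction: if $\chi_1(p)=\chi_2(p)$ for all primes $p\nmid q\ell$, complete multiplicativity gives $\chi_1(n)=\chi_2(n)$ for all $n$ coprime to $q\ell$, and the integer $a\equiv 1\bmod\ell$ with $\chi_1(a)\neq 1$ (obtained by CRT, using that $\chi_1$ is non-principal) yields a contradiction; since $a$ need not be prime, your route avoids Dirichlet's theorem entirely and is slightly more elementary, while the paper's is more direct and pins down an explicit congruence class of admissible primes. One small remark: your intermediate musing about extending $\chi_1$ to a character modulo $\ell$ is unnecessary and somewhat loose, but the concrete CRT argument you give immediately afterwards is complete and suffices on its own.
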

    With these propositions we can prove Theorem~\ref{thmtwo} in the same way as in~\eqref{thmoneproof}.
    In the proofs below we make extensive use of the following facts about Gauss sums, see~\cite[pg.~65~(2)]{davenport1980}.
    \begin{equation}\label{eq:gauss1}
        G(n,\chi_{1})=\overline{\chi_{1}}(n)G(1,\chi_{1}),
    \end{equation}
    since $q$ is a prime, and (see~\cite[pg.~66~(5)]{davenport1980})
    \begin{equation}\label{eq:gauss2}
        G(1,\overline{\chi_{1}})G(-1,\chi_{1})=q,
    \end{equation}
    and similarly for $\chi_{2}$.

    \subsection{Proof of Proposition~\ref{prop21}}
    We prove the proposition for $\chi_{1}$ as the case of $\chi_{2}$ is identical.
    Denote the integral in~\eqref{eq:prop21_1} by $\inti$. Then
    \begin{align*}
        \inti &= \left(\int_{a+i}^{a+iT}+\int_{a+iT}^{1-a+iT}+\int_{1-a+iT}^{1-a+i}+\int_{1-a+i}^{a+i}\right)\frac{\zeta'}{\zeta}(s)B(s,p)L(s,\chi_{1})\d{s}\\
        &= \inti_{1} + \inti_{2} + \inti_{3} + \inti_{4}.
    \end{align*}
    We can evaluate $\inti_{1}$ explicitly to get
    \begin{align*}
        \inti_{1}	&= \int_{a+i}^{a+iT}\frac{\zeta'}{\zeta}(s)L(s,\chi_{1})p^{s}\d{s}\\
        &= -i\sum_{n,\,m}\frac{\Lambda(n)\chi_{1}(m)}{(mnp^{-1})^{a}}\int_{1}^{T}\left(\frac{p}{mn}\right)^{it}\d{t}\ll \frac{\zeta'}{\zeta}(a)\zeta(a)+T=\bigo{T},
    \end{align*}
    where the second term comes from the case $mn=p$.
    For $\inti_{2}$ we use the following bounds (see~\cite[pg.~108]{davenport1980}):
    \begin{equation}\label{eq:zetapbound}
        \frac{\zeta'}{\zeta}(\sigma+iT)\ll\log^{2}T,\quad\text{if } -1\leq\sigma\leq 2,\quad \abs{T}\geq 1,
    \end{equation}
    and
    \begin{equation}\label{eq:lbound}
        L(\sigma+iT,\chi_{1})\ll\abs{T}^{1/2}\log\abs{T+2},\quad\text{if } 1-a\leq \sigma\leq a, \quad \abs{T}\geq 1.
    \end{equation}
    These yield $\inti_{2}=\bigo{T^{1/2}\log^{3}T}$.
    Next we consider $\inti_{3}$. Changing variables $s\mapsto 1-\bar{s}$ gives
    \begin{equation*}
        \inti_{3} = \frac{-1}{2\pi i}\int_{a+i}^{a+iT}\frac{\zeta'}{\zeta}(1-\bar{s})L(1-\bar{s},\chi_{1})p^{1-\bar{s}}\d{s}.
    \end{equation*}
    Conjugating and applying the functional equation of $\zeta$ and $L(s,\chi_{1})$ yields
    \begin{equation*}
        \overline{\inti}_{3} = \frac{p}{2\pi i}\int_{a+i}^{a+iT}\left(\frac{\zeta'}{\zeta}(s)+\frac{\gamma'}{\gamma}(s)\right)L(s,\chi_{1})\Delta(s,\chi_{1})p^{-s}\d{s},
    \end{equation*}
    where
    \[\gamma(s) = \pi^{1/2-s}\frac{\Gamma\left(\frac{s}{2}\right)}{\Gamma\left(\frac{1-s}{2}\right)},\]
    and
    \[\Delta(s,\chi_{1})=\left(\frac{q}{2\pi}\right)^{s}\frac{1}{q}G(1,\overline{\chi}_{1})\Gamma(s)\left(e^{-\pi is/2}+\overline{\chi}_{1}(-1)e^{\pi is/2}\right).\]
    Using the definition of $\Delta$ to expand the above we find that
    \[\overline{\inti}_{3}= p\left(\intf_{1}+\dotsb+\intf_{4}\right),\]
    where
    \begin{align*}
        \intf_{1} &= \frac{G(1,\overline{\chi}_{1})}{q}\frac{1}{2\pi i}\int_{a+i}^{a+iT}\frac{\gamma'}{\gamma}(s)\left(\frac{q}{2\pi p}\right)^{s}\Gamma(s)\exp\left(-\frac{\pi is}{2}\right)L(s,\chi_{1})\d{s},\\
        \intf_{2} &= \frac{\overline{\chi}_{1}(-1)G(1,\overline{\chi}_{1})}{q}\frac{1}{2\pi i}\int_{a+i}^{a+iT}\frac{\gamma'}{\gamma}(s)\left(\frac{q}{2\pi p}\right)^{s}\Gamma(s)\exp\left(+\frac{\pi is}{2}\right)L(s,\chi_{1})\d{s},\\
        \intf_{3} &= \frac{G(1,\overline{\chi}_{1})}{q}\frac{1}{2\pi i}\int_{a+i}^{a+iT}\frac{\zeta'}{\zeta}(s)\left(\frac{q}{2\pi p}\right)^{s}\Gamma(s)\exp\left(-\frac{\pi is}{2}\right)L(s,\chi_{1})\d{s},\\
        \intf_{4} &= \frac{\overline{\chi}_{1}(-1)G(1,\overline{\chi}_{1})}{q}\frac{1}{2\pi i}\int_{a+i}^{a+iT}\frac{\zeta'}{\zeta}(s)\left(\frac{q}{2\pi p}\right)^{s}\Gamma(s)\exp\left(+\frac{\pi is}{2}\right)L(s,\chi_{1})\d{s}.
    \end{align*}
    We rewrite $\intf_{1}$ in the following way
    \begin{equation}\label{eq:f1parts}
        \intf_{1} = \frac{G(1,\overline{\chi}_{1})}{q}\int_{1}^{T}\frac{\gamma'}{\gamma}(a+i\tau)\d{\left(\frac{1}{2\pi i}\int_{a+i}^{a+i\tau}\left(\frac{q}{2\pi p}\right)^{s}\Gamma(s)\exp\left(-\frac{\pi is}{2}\right)L(s,\chi_{1})\d{s}\right)}.
    \end{equation}
    By Lemma~\ref{modgonek},
    \begin{equation*}
        \frac{1}{2\pi i}\int_{a+i}^{a+i\tau}\left(\frac{q}{2\pi p}\right)^{s}\Gamma(s)\exp\left(-\frac{\pi i s}{2}\right)L(s,\chi_{1})\d{s}
        =\widesum{n\leq\frac{\tau q}{2\pi p}}\chi_{1}(n)\exp\left(-2\pi i\frac{np}{q}\right)+\bigo{\tau^{1/2+\epsilon}}.
    \end{equation*}
    We can separate the periods to write the sum as
    \[\sum_{a=1}^{q}\chi_{1}(a)\exp\left(-2\pi i\frac{ap}{q}\right)\widesum{\substack{n\leq\frac{\tau q}{2\pi p}\\ n\equiv a\bmod{q}}}1=\frac{\tau}{2\pi p}G(-p,\chi_{1})+O(1).\]
    We integrate by parts in~\eqref{eq:f1parts} and use the standard estimate
    \[ \frac{\gamma'}{\gamma}(s) = \log\frac{\abs{t}}{2\pi}+\bigo{\abs{t}^{-1}},\quad\abs{t}\geq 1, \]
    to see that
    \begin{align*}
        \intf_{1}	&= \frac{C_{\chi_{1}}}{2\pi p}\int_{1}^{T}\left(\log\frac{\tau}{2\pi}+\bigo{\tau^{-1}}\right)\d{\left(\tau+\bigo{\tau^{1/2+\epsilon}}\right)}\\
        &= \frac{C_{\chi_{1}}T}{2\pi p}\log \frac{T}{2\pi}+\bigo{T^{1/2+\epsilon}}.
    \end{align*}
    Similarly by Lemma~\ref{modgonek}, $\intf_{2}$ is $\bigo{\log T}$, while $\intf_{4}=\bigo{1}$.
    For $\intf_{3}$ we have
    \begin{align*}
        \intf_{3}	&= \frac{G(1,\overline{\chi}_{1})}{q}\frac{1}{2\pi i}\int_{a+i}^{a+iT}\left(\frac{q}{2\pi p}\right)^{s}\Gamma(s)\exp\left(-\frac{\pi is}{2}\right)\frac{\zeta'}{\zeta}(s)L(s,\chi_{1})\d{s}\\
        &=\frac{-G(1,\overline{\chi}_{1})}{q}\widesum{mn\leq \frac{Tq}{2\pi p}}\Lambda(m)\chi_{1}(n)\exp\left(-2\pi i\frac{mnp}{q}\right)+\bigo{T^{1/2+\epsilon}}.
    \end{align*}
    Looking at the summation we decompose it as
    \[\widesum{m\leq\frac{Tq}{2\pi p}}\Lambda(m)\widesum{n\leq\frac{Tq}{2\pi pm}}\chi_{1}(n)\exp\left(-2\pi i\frac{mnp}{q}\right).\]
    We separate the periods in the same way as for $\intf_{1}$ and write the above sum as
    \[\widesum{m\leq\frac{Tq}{2\pi p}}\Lambda(m)G(-mp,\chi_{1})\frac{T}{2\pi pm}+\bigo{T}.\]
    We will show that the summation over $m$ in fact converges. This means that we have $\intf_{3}=\bigo{T}$.
    To do this it suffices to consider
    \[\sum_{m\leq X}\frac{\Lambda(m)\overline{\chi}_{1}(m)}{m}.\]
    Let
    \[\psi(X,\overline{\chi}_{1})=\sum_{m\leq X}\Lambda(m)\overline{\chi}_{1}(m).\]
    Then, by~\cite[pg.~123~(8)]{davenport1980},
    \[\psi(X,\overline{\chi}_{1})=-\frac{X^{\beta}}{\beta}+\bigo{X\exp(-c(\log X)^{1/2})},\]
    where the term with $\beta$ comes from the Siegel zero of $\chi_{1}$ and $c$ is some positive absolute constant. However, since our $q$ is fixed, we know that
    $\beta$ is bounded away from 1. Hence, with summation by parts we obtain
    \[\sum_{m\leq X}\frac{\Lambda(m)\overline{\chi}_{1}(m)}{m}=\frac{\psi(X,\overline{\chi}_{1})}{X}+\int_{1}^{X}\frac{\psi(t,\overline{\chi}_{1})}{t^{2}}\d{t}=\bigo{1}\]
    as required.
    Finally $\inti_{4}=\bigo{1}$ as the integrand is analytic in a neighbourhood of the line of integration.\qed

    \subsection{Proof of Proposition~\ref{prop22}}
    We prove the case $j=j'=1$ as the other cases are either similar or easier. Now, denote the integral by $\inti$, i.e.
    \[ \inti = \frac{1}{2\pi i}\int_{\ccontour}\frac{\zeta'}{\zeta}(s)L(s,\chi_{1})L(1-s,\overline{\chi}_{1})\d{s},\]
    and split it in the same way as in the proof of Proposition~\ref{prop21}, so that
    \[ \inti = \inti_{1}+\dotsb+\inti_{4}.\]
    We can write $\inti_{1}$ as
    \begin{align*}
        \inti_{1}	&= \frac{1}{2\pi i}\int_{a+i}^{a+iT}\frac{\zeta'}{\zeta}(s)L(s,\chi_{1})^{2}\Delta(s,\chi_{1})\d{s}\\
        &= \frac{G(\overline{\chi}_{1})}{q}\frac{1}{2\pi i}\int_{a+i}^{a+iT}\left(\frac{q}{2\pi}\right)^{s}\Gamma(s)\left(\exp\left(\frac{-\pi is}{2}\right)+\overline{\chi}_{1}(-1)\exp\left(\frac{\pi is}{2}\right)\right)\frac{\zeta'}{\zeta}(s)L(s,\chi_{1})^{2}\d{s}\\
        &= \inte_{1}+\inte_{2}.
    \end{align*}
    By Lemma~\ref{modgonek}, $\inte_{2}=\bigo{1}$. Let us now estimate $\inte_{1}$. We have
    \[ \inte_{1}=-\frac{G(\overline{\chi}_{1})}{q}\widesum{mn\leq\frac{Tq}{2\pi}}d(n)\chi_{1}(n)\Lambda(m)\exp\left(-2\pi i\frac{nm}{q}\right)+\bigo{T^{1/2+\epsilon}}.\]
    Denote the sum over $m$ and $n$ by $S$. As before, we first separate the periods
    \begin{align*}
        S &= \widesum{a,\,b=1}[q]\chi_{1}(a)\exp\left(-2\pi i\frac{ab}{q}\right)\widesum{\substack{mn\leq\frac{Tq}{2\pi}\\ n\equiv a\bmod{q}\\ m\equiv b\bmod{q}}}d(n)\Lambda(m).\\
        \intertext{Now sum over characters $\eta$ of modulus $q$ to get}
        &=\frac{1}{\varphi(q)}\sum_{\eta\bmod{q}}\widesum{a,\,b=1}[q]\chi_{1}(a)\overline{\eta}(a)\exp\left(-2\pi i\frac{ab}{q}\right)\widesum{\substack{mn\leq\frac{Tq}{2\pi}}\\ m\equiv b\bmod{q}}d(n)\eta(n)\Lambda(m)\\
        &=\frac{1}{\varphi(q)}\sum_{\eta\bmod{q}}\sum_{b=1}^{q}G(-b,\chi_{1}\overline{\eta})\widesum{\substack{mn\leq\frac{Tq}{2\pi}}\\ m\equiv b\bmod{q}}d(n)\eta(n)\Lambda(m),\\
        \shortintertext{and as before}
        &=\frac{1}{\varphi(q)^{2}}\widesum{\eta,\,\omega\bmod{q}}G(-1,\chi_{1}\overline{\eta})\left(\rwidesum{\substack{mn\leq\frac{Tq}{2\pi}}}d(n)\eta(n)\Lambda(m)\omega(m)\right)\sum_{b=1}^{q}\overline{\chi}_{1}(b)\eta(b)\overline{\omega}(b).
    \end{align*}
    The sum over $b$ is non-zero if and only if $\omega=\omega_{0}$ and $\eta=\chi_{1}$; or $\omega=\overline{\chi_{1}}$ and $\eta=\eta_{0}$; or $\omega\neq\omega_{0}$ and $\eta=\chi_{1}\omega$.
    By Perron's formula
    \[-\widesum{mn\leq\frac{Tq}{2\pi}}d(n)\eta(n)\Lambda(m)\omega(m)=\frac{1}{2\pi i}\int_{a-iU}^{a+iU}\frac{L'}{L}(s,\omega)L(s,\eta)^{2}\left(\frac{Tq}{2\pi}\right)^{s}\frac{ds}{s}+\bigos{\frac{T\log^{3}T}{U}}\]
    for some $U$ with $|U|\leq T$.
    Since our characters are fixed, we can use Vinogradov-type zero-free region~\cite[pg.~296]{garunkstis2010}. That is, let $b_{1}=1-c_{1}/(\log t)^{3/4+\epsilon}$
    (in fact, any power smaller than 1 would do), then
    $L(\sigma+it,\chi_{1})$ has no zeros in the region $\sigma\geq b_{1}$.
    Here $c_{1}$ is some positive absolute constant. By the approximate functional equation and Stirling asymptotics we have uniformly for $0<\sigma< 1$ and $\abs{t}>1$ that
    \begin{equation}\label{eq:convexity}
        L(s,\chi_{1})\ll\abs{t}^{\frac{1-\sigma}{2}}\log(\abs{t}+1).
    \end{equation}
    Then, by shifting the contour we get
    \begin{multline}\label{eq:perron}
        -\widesum{mn\leq\frac{Tq}{2\pi}}d(n)\eta(n)\Lambda(m)\omega(m)=\Res_{s=1}\frac{L'}{L}(s,\omega)L(s,\eta)^{2}\left(\frac{Tq}{2\pi}\right)^{s}\frac{1}{s}\\
        -\frac{1}{2\pi i}\left(\int_{a+iU}^{b_{1}+iU}+\int_{b_{1}+iU}^{b_{1}-iU}+\int_{b_{1}-iU}^{a-iU}\right)\frac{L'}{L}(s,\omega)L(s,\eta)^{2}\left(\frac{Tq}{2\pi}\right)^{s}\frac{ds}{s}+\bigos{\frac{T\log^{3}T}{U}}.
    \end{multline}
    We need to find the residues in each of the three cases.
    \begin{align*}
        \Res_{s=1}\frac{L'}{L}(s,\omega_{0})L(s,\chi_{1})^{2}\left(\frac{Tq}{2\pi}\right)^{s}\frac{1}{s}&=-L(1,\chi_{1})^{2}\frac{Tq}{2\pi},\\
        \Res_{s=1}\frac{L'}{L}(s,\overline{\chi}_{1})L(s,\eta_{0})^{2}\left(\frac{Tq}{2\pi}\right)^{s}\frac{1}{s}&=\frac{Tq}{2\pi}\left(\frac{\varphi(q)}{q}\right)^{2}\frac{L'}{L}(1,\overline{\chi}_{1})\log\frac{Tq}{2\pi}+\bigo{T},\\
        \Res_{s=1}\frac{L'}{L}(s,\omega)L(s,\chi_{1}\omega)^{2}\left(\frac{Tq}{2\pi}\right)^{s}\frac{1}{s} &= 0.
    \end{align*}
    It remains to estimate the integrals on the right-hand side of~\eqref{eq:perron}.
    By~\eqref{eq:zetapbound} and~\eqref{eq:convexity} we see that the first and third integrals yield $\bigo{T^{a}U^{-b_{1}}\log^{4}U}$.
    We split the second integral and estimate it as
    \begin{multline*}
        \left(\int_{b_{1}+iU}^{b_{1}+i}+\int_{b_{1}+i}^{b_{1}-i}+\int_{b_{1}-i}^{b_{1}-iU}\right)\frac{L'}{L}(s,\overline{\chi}_{1})L(s,\eta_{0})^{2}\left(\frac{Tq}{2\pi}\right)^{s}\frac{ds}{s}\\
        =\bigo{T^{b_{1}}U^{1-b_{1}}\log^{4}U+T^{b_{1}}\abs{b_{1}-1}^{-2}},
    \end{multline*}
    where the second error term comes from the integral over the constant segment.
    It suffices to choose $U=T^{1/2}$ as then
    \begin{align*}
        T^{a}U^{-b_{1}}\log^{4}U &\ll Te^{-\frac{1}{2}\log T+\frac{c_{1}}{2}(\log T)^{1/4-\epsilon}+4\log\log T},\\
        T^{b_{1}}U^{1-b_{1}}\log^{4}U&\ll Te^{-\frac{c_{1}}{2}(\log T)^{1/4-\epsilon}+4\log\log T},\\
        \shortintertext{and}
        T^{b_{1}}|b_{1}-1|^{-2} &\ll Te^{-c_{1}(\log T)^{1/4-\epsilon}+(3/2+2\epsilon)\log\log T},
    \end{align*}
    which are all $\bigo{T}$.
    Therefore we conclude that $\inti_{1}=\bigo{T\log T}$.

    Next up is $\inti_{2}$. We use again the convexity bound writing it as
    \[ L(\sigma+it,\chi_{1})\ll_{\epsilon}\abs{t}^{\mu_{0}(\sigma)+\epsilon},\]
    where $\epsilon>0$, $-1<\sigma<2$ (say), $\abs{t}>1$, and
    \[ \mu_{0}(\sigma)=\begin{cases} 0,& \text{if } \sigma >1,\\ \frac{1-\sigma}{2}, & \text{if } 0<\sigma<1,\\ \frac{1}{2}-\sigma, & \text{if } \sigma<0.\end{cases}\]
    With this we can write
    \begin{align*}
        \inti_{2}	&= \frac{1}{2\pi i}\int_{a+iT}^{1-a+iT}\frac{\zeta'}{\zeta}(s)L(s,\chi_{1})L(1-s,\overline{\chi}_{1})\d{s}\\
        &\ll \left(\int_{1-a}^{0}+\int_{0}^{1}+\int_{1}^{a}\right)\log^{2}T\, T^{\mu_{0}(\sigma)+\epsilon}T^{\mu_{0}(1-\sigma)+\epsilon}\d{\sigma}.
    \end{align*}
    Keeping in mind that $\sigma\leq a$ we get
    \[ \inti_{2}\ll T^{a-1/2+\epsilon}\log^{2}T+T^{1/2+\epsilon}\log^{2}T=\bigo{T}. \]
    For $\inti_{3}$ we do the usual trick of mapping $s\mapsto 1-\bar{s}$. Taking complex conjugates leads to
    \[ \overline{\inti}_{3} = \frac{1}{2\pi i}\int_{a+i}^{a+iT}\left(\frac{\zeta'}{\zeta}(s)+\frac{\gamma'}{\gamma}(s)\right)L(s,\chi_{1})^{2}\Delta(s,\chi_{1})\d{s}. \]
    As in Proposition~\ref{prop21} we split this up into $\intf_{1}, \dotsc, \intf_{4}$. Adding up $\intf_{3}$ and $\intf_{4}$ gives $\inti_{1}$, which is $\bigo{T\log T}$.
    As before, $\intf_{2}$ does not contribute.
    So we have to estimate $\intf_{1}$, that is
    \[\intf_{1}=\frac{G(1,\overline{\chi}_{1})}{q}\int_{1}^{T}\left(\log\frac{\tau}{2\pi}+\bigo{\tau^{-1}}\right)\,\d{\left(\frac{1}{2\pi i}\int_{a+i}^{a+i\tau}L(s,\chi_{1})^{2}\Gamma(s)\exp\left(-\frac{\pi is}{2}\right)\d{s}\right)}.\]
    Working as in Proposition~\ref{prop21} we can write the inner integral (plus an error term) as
    \[ \widesum{n\leq\frac{\tau q}{2\pi}}\chi_{1}(n)d(n)\exp\left(-2\pi i\frac{n}{q}\right). \]
    This is $\bigo{\tau\log\tau}$, which gives $\inti_{3}=\bigo{T\log^{2}T}$. It is not difficult to extend this to an asymptotic estimate, but for our purposes
    the upper bound is sufficient.
    Trivially we also have that $\inti_{4}=\bigo{1}$. Hence $\inti=\bigo{T\log^{2} T}$.\qed

    \subsection{Proof of Proposition~\ref{prop23}}
    By~\eqref{eq:gauss1} and~\eqref{eq:gauss2} we see that
    \[C_{\chi_{1}}=C_{\chi_{2}}\]
    if and only if $\chi_{1}(p)=\chi_{2}(p)$. By Chinese Remainder Theorem and Dirichlet's Theorem we can find a prime $p$ different from $q$ and $\ell$ that satisfies
    \[\left\{\begin{aligned}
                p &\equiv 1\bmod{q},\\
                p &\equiv a\bmod{\ell},
            \end{aligned}\right.\]
    such that $\chi_{2}(p)=\chi_{2}(a)\neq 1$, since $\chi_{2}$ is non-principal. This gives $1=\chi_{1}(p)=\chi_{2}(p)\neq 1$, which is a contradiction.
    \qed

    \printbibliography

\end{document}